\newcommand{\R}{\mathbb{R}}
\newcommand{\OO}{\operatorname{O}}
\newcommand{\SO}{\operatorname{SO}}
\newcommand{\SU}{\operatorname{SU}}
\newcommand{\Sp}{\operatorname{Sp}}
\newcommand{\Spin}{\operatorname{Spin}}
\newcommand{\so}{\mathfrak{so}}
\newcommand{\In}{\subset}
\newcommand{\Hom}{\operatorname{Hom}}
\newcommand{\scal}[1]{\langle #1\rangle}
\newcommand{\Ad}{\operatorname{Ad}}
\newcommand{\Ric}{\operatorname{Ric}}
\newcommand{\Ind}{\operatorname{ind}}
\newcommand{\ACS}{\operatorname{ACS}}
\newcommand{\isom}{\operatorname{Isom}}
\newcommand{\id}{\operatorname{Id}}
\newcommand{\II}{I\!I}
\newcommand{\g}{\mathfrak{g}}
\newcommand{\h}{\mathfrak{h}}
\newcommand{\m}{\mathfrak{m}}
\newcommand{\V}{\mathcal{V}}
\newcommand{\Hr}{\mathcal{H}}
\newtheorem{theorem}{Theorem}
\newtheorem{corollary}[theorem]{Corollary}
\newtheorem*{corollary*}{Corollary}
\newtheorem{lemma}[theorem]{Lemma}
\newtheorem{proposition}[theorem]{Proposition}
\newtheorem*{conjecture}{Conjecture}
\newtheorem{maintheorem}{Theorem}
\newtheorem{maincorollary}[maintheorem]{Corollary}
\theoremstyle{definition}
\newtheorem{definition}[theorem]{Definition}
\theoremstyle{remark}
\newtheorem{remark}[theorem]{Remark}
\newtheorem{example}[theorem]{Example}
\title[Virtual immersions and minimal hypersurfaces]{Virtual immersions and minimal hypersurfaces in compact symmetric spaces}
\author[R.~Mendes]{Ricardo A. E. Mendes$^* \dag$}
\address{University of Oklahoma, US}
\email{ricardo.mendes@ou.edu}
\author[M.~Radeschi]{Marco Radeschi$^{*+}$}
\address{University of Notre Dame, US}
\email{mradesch@nd.edu}
\thanks{$^*$ received support from SFB 878: \emph{Groups, Geometry \& Actions}}
\thanks{$\dag$ received support from DFG ME 4801/1-1 and NSF grant DMS-2005373}
\thanks{$^+$ received support from NSF grant 1810913}
\subjclass[2010]{49Q05, 53A10, 53C35}
\keywords{minimal hypersurface, compact symmetric space, isometric immersion}
\begin{document}

\maketitle
%\tableofcontents
\begin{abstract}
We show that closed, immersed, minimal hypersurfaces in a compact symmetric space satisfy a lower bound on the index plus nullity, which depends linearly on their first Betti number. Moreover, if either the minimal hypersurface satisfies a certain genericity condition, or if the ambient space is a product of two CROSSes, we improve this to a lower bound on the index alone, which is affine in the first Betti number. To prove these results, we introduce a generalization of isometric immersions in Euclidean space. Compact symmetric spaces admit (and in fact are characterized by) such a structure with \emph{skew-symmetric} second fundamental form.
\end{abstract}

\section{Introduction}

Let $(M,g)$ be a Riemannian manifold, and $\Sigma$ a minimal immersed submanifold. This means that the second fundamental form of $\Sigma$ is traceless, or, equivalently, that $\Sigma$ is a critical point of the area functional. Then one is naturally led to consider variations up to second order, and to define the (Morse)  \emph{index} of $\Sigma$ as the dimension of the space of negative variations. When $\Sigma$ is closed, the index is finite.

Many authors have developed methods to produce minimal submanifolds, including Min-Max Theory (see \cite{Marques14, Neves14} for surveys), desingularization (see for example \cite{Kapouleas11, ChoeSoret16}), and equivariant methods (see for example  \cite{HsiangLawson71, Hsiang83, Hsiang87}). For some of these the index of the minimal submanifold is controlled, while for others the topology is controlled. On the other hand, the set of all minimal submanifolds of bounded index, area, or topology is the object of active research, in particular compactness results such as \cite{ChoiSchoen85, CKM15, Sharp17} have been obtained. Therefore it is natural to ask how the topology and the index of minimal submanifolds are related. One conjecture that fits in this framework is: (see \cite[page 16]{Neves14}, or \cite[page 3]{ACS16} for a slightly different formulation)

\begin{conjecture}[Marques-Neves-Schoen]
Let $(M,g)$ be a compact manifold with positive Ricci curvature, and dimension at least three. Then there exists $C>0$ such that, for all closed embedded orientable minimal hypersurfaces $\Sigma\to M$, 
\[\Ind(\Sigma)\geq Cb_1(\Sigma)\]
where $b_1(\Sigma)$ denotes the first Betti number of $\Sigma$ with real coefficients.
\end{conjecture}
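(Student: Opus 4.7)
The plan is to follow the general strategy of Ros, Savo, and Ambrozio--Carlotto--Sharp, which produces negative directions for the Jacobi quadratic form $Q$ of $\Sigma$ by pairing harmonic $1$-forms on $\Sigma$ with ``ambient'' vector fields on $M$. Since the conjecture is phrased for arbitrary compact manifolds with positive Ricci curvature, one cannot rely on isometries of $M$, and so one must manufacture a substitute for the parallel vector fields that are available in Euclidean space. This is the role of the framework announced in the abstract: whenever $M$ admits a virtual immersion into some Euclidean space $V$, each constant vector field $v \in V$ projects to a well-defined vector field $v^T$ on $M$, yielding a finite-dimensional family of canonical ambient fields whose covariant derivatives are controlled by the second fundamental form $\II$.

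Next, for each harmonic $1$-form $\omega$ on $\Sigma$ and each $v \in V$, I would form a scalar test function $u_{v,\omega}$ on $\Sigma$ from the contraction of $\omega$ with the tangential component of $v^T$ along $\Sigma$, and evaluate $Q$ on the normal variation $u_{v,\omega}\,\nu$, where $\nu$ is the unit normal. A Bochner-type computation, using the minimality of $\Sigma$ and the harmonicity of $\omega$, should rewrite the sum $\sum_i Q(u_{e_i,\omega})$ over an orthonormal basis $\{e_i\}$ of $V$ as a manifestly nonpositive topological term proportional to $|\omega|^2$, plus a curvature correction that becomes nonpositive when $\II$ is \emph{skew-symmetric}---which, per the abstract, characterizes the compact symmetric case. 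Injectivity of the assignment $\omega \mapsto (u_{e_i,\omega})_i$ modulo Jacobi fields would then produce $b_1(\Sigma)$ independent negative directions for $Q$.

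The main obstacle will be controlling the kernel of this assignment. Generically the map is injective, giving a bound on $\Ind(\Sigma)$ itself; but in general some of the test sections could lie in the kernel of $Q$, yielding only a bound on $\Ind(\Sigma)$ plus the nullity of $\Sigma$. Removing the nullity term seems to require either a genericity hypothesis on $\Sigma$ to preclude accidental Jacobi fields arising from the construction, or extra ambient structure, as in the product-of-CROSSes case, where one can exploit the two Euclidean factors independently to separate kernel contributions. For the full Marques--Neves--Schoen conjecture, the genuine difficulty is that a generic positive Ricci manifold need not admit any virtual immersion into Euclidean space with skew-symmetric second fundamental form, so an entirely different source of ambient test fields would be needed beyond the symmetric space setting; this last step I do not see how to carry out with the present tools.
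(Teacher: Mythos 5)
You have correctly recognized that the statement is a \emph{conjecture}: the paper does not prove it, and there is therefore no proof of this statement to compare against. What the paper does is establish partial results (Theorems~\ref{MT:indexbound}, \ref{MT:affine}, and the Corollary) in the special case where $M$ is a compact symmetric space, and your outline accurately reconstructs that strategy: introduce virtual immersions $\Omega:TM\to V$ of the tangent bundle into a flat trivial bundle, use the induced decomposition $M\times V=TM\oplus\nu M$ with its second fundamental form $\II$, and observe that the natural virtual immersion into $\g=\operatorname{Lie}(\isom M)$ has \emph{skew-symmetric} $\II$ precisely in the symmetric case, which, via Gauss' equation $R(X,Y,X,Y)=|\II(X,Y)|^2$, forces the ACS quantity to vanish and gives a bound on $\Ind_0(\Sigma)$ in terms of $b_1(\Sigma)$. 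You also correctly identify the two mechanisms the paper uses to pass from $\Ind_0$ to $\Ind$ (genericity of principal curvatures, or the product-of-CROSSes structure), and you correctly note that this entire apparatus only exists because $M$ is symmetric --- an arbitrary positive-Ricci manifold admits no virtual immersion with skew-symmetric $\II$ (Theorem~\ref{MT:classification}) --- so the general conjecture is out of reach of these tools.

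One technical imprecision in your sketch: the test sections the paper actually uses are not contractions of $\omega$ with tangential parts of constant vectors $v\in V$, but rather
\[
X_{ij}(\omega)=\left\langle \omega^\#\wedge N,\;\theta_i\wedge\theta_j\right\rangle N,
\]
indexed by pairs $1\le i<j\le d$, giving $\binom{d}{2}$ sections per harmonic form rather than $d$. The wedge with $N$ is essential: it is what makes the sum $\sum_{i<j}Q(X_{ij},X_{ij})$ collapse to the integral of $\ACS(\omega^\#,N)$ and is also what makes the construction independent of the local choice of sign of $N$, so that it works on one-sided hypersurfaces. Your version, which contracts $\omega$ directly with $v^T$, is closer to Simons' and Savo's earlier constructions and would not by itself yield the sharp $\binom{\dim G}{2}^{-1}$ constant or handle the non-orientable case. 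Apart from this, your reading of the paper's partial progress and of the obstruction to the full conjecture is accurate.
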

Variations of this conjecture include replacing the assumption that the Ricci curvature is positive with other notions of positivity (or non-negativity) of the curvature; replacing the index with the \emph{extended index} $\Ind_0$, that is, the sum of index and nullity; and replacing the linear bound with an affine bound of the form $\Ind\geq C(b_1-D)$.

Some special cases of the conjecture above (or variations thereof) have been recently established. For example, Ros has considered the case where $(M,g)$ is a flat $3$-torus, and has found affine bounds on the index --- see Theorem 16 in  \cite{Ros06} (see also \cite{ChodoshMaximo16}). The authors of \cite{ACS17} have extended Ros' work to the case where the ambient space $M$ is a flat torus of arbitrary dimension. Namely, they provide an affine bound for the index of minimal hypersurfaces which (if the torus has dimension $>4$) are required to have points where all principal curvatures are distinct. Savo \cite{Savo10} has given linear bounds on the index of minimal hypersurfaces in round spheres, and \cite{ACS16} have extended these bounds to the other compact rank one symmetric spaces. Moreover, the methods in \cite{ACS16} sometimes allow for small perturbations of the ambient metric in certain directions.

Note that the results mentioned above mostly apply to ambient spaces in subclasses of compact symmetric spaces. Our main result applies uniformly to this whole class:
\begin{maintheorem}
\label{MT:indexbound}
Let $(M,g)$ be a compact symmetric space, $G$ its isometry group, and $\Sigma\subset M$ a closed, immersed minimal hypersurface. Then the extended index of $\Sigma$ satisfies
\[ \Ind_0(\Sigma)\geq \binom{\dim G}{2}^{-1}b_1(\Sigma). \]
\end{maintheorem}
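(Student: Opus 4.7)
My plan is to construct, for each harmonic $1$-form on $\Sigma$ and each ``virtual infinitesimal rotation'' of the target of the virtual immersion of $M$, a scalar test function on $\Sigma$ whose collection spans, inside $C^\infty(\Sigma)$, a subspace on which the index form is negative semidefinite. An $L^2$-injectivity statement coupled with a dimension count will then yield the announced inequality.

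First I would invoke the virtual immersion $F\colon M \to V$ with skew-symmetric second fundamental form $\II^F$ that the paper constructs on any compact symmetric space; here $\dim V = \dim G =: N$. After fixing an orthonormal basis $e_1,\dots,e_N$ of $V$, I would associate to each antisymmetric pair $1\le i<j\le N$ the vector field $X_{ij}$ on $M$ induced by the infinitesimal rotation of $V$ in the $(i,j)$-plane through the virtual immersion structure, obtaining $\binom{\dim G}{2}$ such fields. Then, for each $\omega\in\Hr^1(\Sigma)$ and each pair $(i,j)$, I would form the test function
\[
\phi^{\omega}_{ij} := \omega\bigl(X_{ij}^{\top}\bigr) \in C^\infty(\Sigma),
\]
with $(\cdot)^\top$ the tangential projection along $\Sigma$.

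The core computation would be the global stability inequality
\[
\sum_{1\le i<j\le N} Q_\Sigma\!\left(\phi^{\omega}_{ij},\phi^{\omega}_{ij}\right) \,\le\, 0,
\]
where $Q_\Sigma$ is the index form of $\Sigma\subset M$. Expanding $L_\Sigma \phi^{\omega}_{ij}$ one obtains (a) a divergence-type term, killed by harmonicity of $\omega$; (b) a term proportional to the mean curvature of $\Sigma$, killed by minimality; (c) ``extrinsic'' terms involving $\II$ of $\Sigma\subset M$ and the covariant derivative of $X_{ij}$; and (d) terms involving $\Ric_M$. Upon summation in $(i,j)$, the contributions in (c) and (d) should organize, via the Gauss--Codazzi-type equations of the virtual immersion, into traces of $\II^F$ against itself, which vanish precisely by its skew-symmetry, plus a remainder with the favorable sign. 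Running in parallel, I would prove the injectivity of
\[
T\colon \Hr^1(\Sigma) \longrightarrow \bigoplus_{i<j} C^\infty(\Sigma),\qquad \omega \longmapsto \bigl(\phi^{\omega}_{ij}\bigr)_{i<j},
\]
through a pointwise identity $\sum_{i<j}(\phi^{\omega}_{ij})^2 \ge c\,|\omega|^2$ with $c>0$ depending only on the virtual immersion; this identity is the minimal-hypersurface trace of the ``virtual isometry'' condition generalizing the classical $\sum_i df_i\otimes df_i = g$.

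Combining the two, $T(\Hr^1(\Sigma))$ is a $b_1(\Sigma)$-dimensional subspace of $\bigoplus_{i<j}C^\infty(\Sigma)$ on which the orthogonal-direct-sum index form $\bigoplus_{i<j} Q_\Sigma$ is negative semidefinite. By the variational definition of the extended Morse index, any such subspace has dimension at most $\binom{\dim G}{2}\cdot \Ind_0(\Sigma)$, and the theorem follows. The hard part will be the stability inequality: one must match every term produced by $L_\Sigma\phi^{\omega}_{ij}$ against the stability form and then verify that the total reassembles into a sum of traces of $\II^F$ — the precise objects whose vanishing is the skew-symmetry established earlier in the paper. This reorganization is the step where the full force of the virtual immersion framework, and not merely the existence of $F$, is used; it closely mirrors, and considerably generalizes, the Simons-type calculations driving the analogous bounds of Savo and Ambrozio--Carlotto--Sharp in the CROSS and flat torus settings.
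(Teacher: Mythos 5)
Your overall plan --- construct a linear map from harmonic $1$-forms to $\binom{d}{2}$-tuples of normal test sections, show that the summed index form is nonpositive, then run a dimension count against the span of the nonpositive eigenspaces --- is precisely the architecture of the paper's proof (its Lemma~\ref{L:cond}, Proposition~\ref{P:criterion}, and the Gauss-equation computation). However, the construction of the test objects contains a gap that breaks the proposal.

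You write ``invoke the virtual immersion $F\colon M\to V$.'' A virtual immersion is \emph{not} a map $M\to V$: it is a $V$-valued $1$-form $\Omega\colon TM\to V$, equivalently an isometric embedding of $TM$ into the flat bundle $M\times V$ for which the flat connection reproduces the Levi--Civita connection. There is no position map $F$. Indeed, if $\Omega$ were exact, $\Omega=dF$, then $d\Omega=0$ and the second fundamental form would be \emph{symmetric} --- the construction on a compact symmetric space is specifically one where $\Omega$ is not exact, so that $\II$ can be skew-symmetric. Because there is no $F$, the vector fields $X_{ij}$ you propose (``induced by the infinitesimal rotation of $V$ in the $(i,j)$-plane''), which in the classical case would be $p\mapsto A_{ij}F(p)$, are not defined. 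Therefore the test functions $\phi^\omega_{ij}=\omega(X_{ij}^{\top})$ do not exist in the virtual-immersion setting.

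The correct test sections are built purely from derivative data: with $\theta_1,\dots,\theta_d$ an orthonormal basis of $V$, set $X_{ij}(\omega)=\left<\Omega(\omega^\#)\wedge\Omega(N),\,\theta_i\wedge\theta_j\right>N$, where $N$ is the unit normal of $\Sigma$ in $M$ and both $\omega^\#$ and $N$ are embedded in $V$ via $\Omega$. No position enters. For this choice, your proposed pointwise injectivity bound $\sum_{i<j}\phi_{ij}^2\geq c\,|\omega|^2$ holds with $c=1$, since $|\omega^\#\wedge N|^2=|\omega^\#|^2$ by orthogonality of $\omega^\#$ and $N$. The ``reorganization'' step is then not a vanishing of traces of $\II$ by skew-symmetry alone; rather, skew-symmetry enters in two ways: it forces $\II(y,y)=0$, and it turns Gauss' equation into $R(X,Y,X,Y)=|\II(X,Y)|^2$, and the combination of these makes the quantity $\operatorname{ACS}(\omega^\#,N)$ vanish identically. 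Once you replace your $\phi_{ij}^\omega$ by the paper's $X_{ij}(\omega)$ and justify the identity $\sum_{i<j}Q(X_{ij}(\omega),X_{ij}(\omega))=\int_\Sigma\operatorname{ACS}(\omega^\#,N)$, the dimension-count closing argument you sketched is exactly right.
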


To prove Theorem \ref{MT:indexbound} (as well as the previous results mentioned above) one needs to produce enough negative variations, and roughly speaking, these come from coordinates of vector fields. In \cite{Ros06}, \cite{ACS17} about flat tori, the tangent bundle is trivial, and a choice of parallelization leads to such coordinates. In \cite{Savo10}, \cite{ACS16}, such coordinates come from an embedding of the ambient manifold $(M,g)$ into Euclidean space, an idea that goes back at least to \cite{Simons68} (see also  \cite[Corollary 2.2]{Savo10}). Our method of proof generalizes all of these: we consider embeddings of the tangent bundle of $M$ into a flat trivial bundle $M\times V$ over $M$, such that the natural flat connection on $M\times V$ induces the Levi-Civita connection of $M$.

Such structures, which we call \emph{virtual immersions}, exhibit an extrinsic geometry similar to the classical case. More precisely, one may define the normal bundle, second fundamental form, and normal connection, and these satisfy identities analogous to the fundamental equations of Gauss, Codazzi, and Ricci. The important difference is that the second fundamental form is not necessarily symmetric, and in fact the case where it is symmetric corresponds exactly to classical isometric immersions into Euclidean space. In the present article, we mostly consider the opposite extreme, namely virtual immersions with \emph{skew-symmetric} second fundamental form. We show that every
compact symmetric space admits a natural such virtual immersion, which lies at the heart of the proof of Theorem \ref{MT:indexbound}.

By the Nash Embedding Theorem, every Riemannian manifold admits an isometric embedding into Euclidean space. In contrast, virtual immersions with skew-symmetric second fundamental form are extremely rigid, and in fact their existence \emph{characterizes} symmetric spaces:
\begin{maintheorem}
\label{MT:classification}
Let $(M,g)$ be a compact Riemannian manifold. It admits a virtual immersion $\Omega$  with skew-symmetric second fundamental form if and only if it is a symmetric space. In this case, $\Omega$ is essentially unique.
\end{maintheorem}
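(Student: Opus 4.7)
My plan is to establish the two implications of the equivalence separately and then address uniqueness.

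For the implication \emph{symmetric space $\Rightarrow$ virtual immersion with skew $\II$}, I would work with the presentation $M=G/K$ of a compact symmetric space, with Cartan decomposition $\g=\mathfrak{k}\oplus\m$ of the Lie algebra of $G$, equipped with an $\Ad(G)$-invariant inner product. Setting $V=\g$, I define $\Omega\colon TM\hookrightarrow M\times V$ by identifying $T_pM$ with the orthogonal complement $\m_p\subset\g$ of the isotropy subalgebra at $p$. Using the standard expression of the Levi-Civita connection of $G/K$ in terms of fundamental Killing fields, the $\m_p$-component of the flat connection $\nabla^0$ on $M\times V$ should recover $\nabla^{LC}$, and the resulting second fundamental form is $\II(X,Y)=[X,Y]$ for $X,Y\in\m_p$, lying in $\mathfrak{k}_p=N_pM$ since $[\m_p,\m_p]\subset\mathfrak{k}_p$. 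Skew-symmetry is then immediate from antisymmetry of the Lie bracket.

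For the reverse implication, the key step is to show that $\II$ is parallel, from which $\nabla R=0$ follows. Decomposing $\nabla^0$ on tangent sections as $\nabla^0_XY=\nabla^{LC}_XY+\II(X,Y)$ and on normal sections as $\nabla^0_X\xi=-S_\xi X+\nabla^\perp_X\xi$ (with $\scal{S_\xi X,Y}=\scal{\II(X,Y),\xi}$), the normal part of $R^0(X,Y)Z=0$ on tangent triples yields a Codazzi-type identity
\[ (\nabla^\perp_X \II)(Y,Z)=(\nabla^\perp_Y \II)(X,Z) \]
formally identical to the classical one. Combined with skew-symmetry $\II(Y,Z)=-\II(Z,Y)$, the tensor $T(X,Y,Z):=(\nabla^\perp_X \II)(Y,Z)$ is symmetric in $(X,Y)$ and antisymmetric in $(Y,Z)$; a short chase through the six permutations in $S_3$ forces $T\equiv 0$. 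The tangent part of the flat curvature equation yields the Gauss-type identity $\scal{R(X,Y)Z,W}=\scal{\II(X,W),\II(Y,Z)}-\scal{\II(Y,W),\II(X,Z)}$, and since the inner product on $NM$ is automatically $\nabla^\perp$-parallel, parallelism of $\II$ immediately gives $\nabla R=0$. Hence $M$ is locally symmetric; since $M$ is compact and complete, standard theory yields the symmetric-space conclusion.

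For essential uniqueness, given any virtual immersion $\Omega$ with skew (hence parallel) $\II$, I would reconstruct an ambient Lie algebra on a subspace $V'\subset V$ containing $T_pM$ and the image of $\II$, with brackets determined by $\II$ (for $\m\times\m\to\mathfrak{k}$), the shape operators (for $\mathfrak{k}\times\m\to\m$), and commutators of the $S_\xi$ in $\operatorname{End}(T_pM)$ (for $\mathfrak{k}\times\mathfrak{k}\to\mathfrak{k}$). Parallelism of $\II$ is what furnishes the Jacobi identity, exhibiting $(V',T_pM)$ as the Cartan decomposition of a Lie algebra canonically isomorphic to $\g$. The orthogonal complement of $V'$ in $V$ is a $\nabla^\perp$-parallel trivial summand, and modulo this trivial summand $\Omega$ is uniquely determined by $M$.

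The main obstacle I anticipate is the uniqueness clause: pinning down the meaning of ``essentially unique'' and verifying the Jacobi identity on the reconstructed $V'$. On the forward direction, the bookkeeping with Killing fields on $M\times\g$, while classical, requires care to distinguish $\nabla^0$ from the biinvariant or canonical connections; on the reverse direction, the algebraic heart is the $S_3$-symmetry argument above, after which everything reduces to standard facts about locally symmetric spaces.
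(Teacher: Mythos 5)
Your forward direction and the first half of the reverse direction are sound and essentially follow the paper. In particular, the observation that Codazzi makes $(X,Y,Z)\mapsto \langle (D_X\II)(Y,Z),\eta\rangle$ symmetric in $(X,Y)$ and skew in $(Y,Z)$ (hence zero) is exactly Lemma \ref{L:locsym}(b), and the resulting $\nabla R=0$ is Lemma \ref{L:locsym}(c). However, there is a genuine gap in the passage from ``locally symmetric'' to ``symmetric.'' A compact, complete, locally symmetric space need \emph{not} be globally symmetric: for instance, a compact flat $3$-manifold with $\mathbb{Z}/3$ holonomy is compact and locally symmetric but admits no geodesic symmetry. In general, $\Gamma\backslash\tilde M$ is symmetric only when the deck group $\Gamma$ consists of Clifford translations; this is precisely where the paper uses the hypothesis in a non-trivial way. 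The paper's argument pulls the virtual immersion $\Omega$ back to $\tilde M=N\times\R^\ell$, invokes uniqueness (Lemma \ref{L:unique}) to identify $\rho^*\Omega$ with the canonical $\Omega_0\times\operatorname{id}$, and deduces that every $\gamma\in\Gamma$ fixes $\Omega_0\times\operatorname{id}$; from this it follows that $\Ad_g=\operatorname{id}$ on $\g$ and $B$ is a translation, so each $\gamma$ is a Clifford--Wolf translation and the symmetry $s_p$ conjugates $\gamma$ to $\gamma^{-1}$. Without some argument of this kind, invoking ``standard theory'' at this step is not valid.

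Your approach to uniqueness is also genuinely different from the paper's. You propose to rebuild a Lie algebra on a subspace $V'\subset V$ using $\II$, the shape operators, and their commutators, and to deduce that $(V',T_pM)$ is a Cartan pair. This is a reasonable idea but leaves substantial work unaddressed: you must show the bracket on the normal part closes, i.e.\ that $S_\xi^t S_\zeta - S_\zeta^t S_\xi$ is again a shape operator (this is governed by Ricci's equation, Proposition \ref{P:fundamental}(c), not by parallelism of $\II$ alone), and you must then check all cases of the Jacobi identity, which involves relations beyond $\nabla^\perp\II=0$. The paper sidesteps all this by defining a connection $\hat D$ on $TM\oplus\wedge^2 TM$ and bundle maps $\hat\Omega_j$ to $M\times V_j$ that intertwine $\hat D$ with the flat connections; Lemma \ref{L:locsym}(a) then shows $\ker\hat\Omega_1=\ker\hat\Omega_2$, producing a parallel bundle isometry $L$ without ever verifying a Jacobi identity. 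Both routes can lead to the same destination, but the paper's is the shorter one, and in your version the Jacobi verification would need to be carried out in full.
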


Let $(M,g)$ be a compact symmetric space. In some situations, one may ``improve'' Theorem \ref{MT:indexbound} to obtain linear or affine bounds on the index, instead of the extended index, of closed immersed minimal hypersurfaces in $M$. For example when $M$ is a CROSS, we recover, in a uniform way, linear bounds for the index, although with worse constants than the ones obtained in \cite{ACS16} --- see Corollary \ref{C:CROSS}. In higher rank, we have:

\begin{maintheorem}
\label{MT:affine}
Let $M=G/H$ be a compact symmetric space of rank $r\geq 2$, with $G=\isom(M)$, and $\Sigma\subset M$ a closed, immersed minimal hypersurface. Then an affine bound of the form
\[ \Ind(\Sigma)\geq \binom{\dim G}{2}^{-1} (b_1(\Sigma)-D) \]
holds in the following cases:
\begin{enumerate}[a)]
\item the hypersurface $\Sigma$ contains a point where all principal curvatures are distinct, and $D=2r-3+\dim\mathfrak{z}(\h)$. Here $\h$ denotes the Lie algebra of $H$, and $\mathfrak{z}(\h)$ its center.
\item $M$ is a product of two CROSSes $M=M_1\times M_2$, and $D$ is one plus the number of two-dimensional factors.

\end{enumerate}
\end{maintheorem}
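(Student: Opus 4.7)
The strategy is to refine the argument behind Theorem~\ref{MT:indexbound} by controlling the \emph{nullity} contribution of the test-section space constructed there. Recall that Theorem~\ref{MT:indexbound} is proved by building a linear map $\Phi:\Lambda^2\g\to\Gamma(N\Sigma)$ from the $\binom{\dim G}{2}$-dimensional space $\Lambda^2\g$ into sections of the normal bundle of $\Sigma$, such that $Q(\Phi(A),\Phi(A))\leq 0$ for the index form $Q$, and such that $\dim\Phi(\Lambda^2\g)/(\Phi(\Lambda^2\g)\cap\ker Q)\geq b_1(\Sigma)$. To upgrade the bound from $\Ind_0$ to $\Ind$, it suffices to show that $\Phi(\Lambda^2\g)\cap\ker Q$, i.e.\ the subspace of genuine Jacobi fields inside the image, has dimension at most $D$; the image then contains at least $b_1(\Sigma)-D$ strictly negative directions, yielding the claimed affine bound after dividing by $\binom{\dim G}{2}$.

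The first step is to identify the source of Jacobi fields inside $\Phi(\Lambda^2\g)$. Every ambient Killing field $X\in\g$ restricts to a Jacobi field $\langle X,N\rangle$ along $\Sigma$; since the virtual immersion from Theorem~\ref{MT:classification} is built directly from the $G$-action on $M$, I expect the Jacobi subspace of $\Phi(\Lambda^2\g)$ to be characterized by those $A\in\Lambda^2\g$ for which $\Phi(A)$ decomposes as a sum of such Killing restrictions along $\Sigma$. The hypotheses in (a) and~(b) will then be used to bound this subspace.

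For part~(a), I would fix $p\in\Sigma$ where the shape operator has $\dim M-1$ distinct eigenvalues, pass to the corresponding principal frame, and expand the condition $\Phi(A)\in\ker Q$ in that frame. The distinct-eigenvalue condition decouples the resulting linear system, so that an admissible $A$ is determined by its $1$-jet at $p$; real-analyticity of minimal hypersurfaces in the (real-analytic) ambient $M$ propagates vanishing from a neighborhood of $p$ to all of $\Sigma$. A careful count should exhibit the admissible $A$'s as splitting into two families: (i)~a $\dim\mathfrak{z}(\h)$-dimensional family from the center of the isotropy, which commutes with any principal frame at $p$, and (ii)~a $(2r-3)$-dimensional family from the Cartan/Weyl structure of a rank-$r$ symmetric space (reducing to $1$ when $r=2$, matching the torus case in \cite{ACS17}). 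Their sum is exactly $D=2r-3+\dim\mathfrak{z}(\h)$.

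For part~(b), with $M=M_1\times M_2$ a product of two CROSSes, I would exploit the splitting $\g=\g_1\oplus\g_2$ and the induced decomposition $\Lambda^2\g=\Lambda^2\g_1\oplus(\g_1\otimes\g_2)\oplus\Lambda^2\g_2$, analyzing each summand separately. For each factor of dimension $\geq 3$, the CROSS analysis from \cite{ACS16} shows that $\Phi|_{\Lambda^2\g_i}$ produces no Jacobi fields beyond Killing restrictions, which are already accounted for; each $2$-dimensional factor contributes one extra degenerate direction owing to its small isometry algebra $\mathfrak{so}(3)$; and the mixed summand $\g_1\otimes\g_2$ contributes one further direction from the relative scaling of the two factors, totalling $D=1+\#(\text{two-dimensional factors})$. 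The main obstacle, which lies in part~(a), will be rigorously translating the local condition ``$\Phi(A)\in\ker Q$ near $p$'' into a finite-dimensional linear system on $\Lambda^2\g$ and matching its kernel with $\mathfrak{z}(\h)$ plus a Cartan residue; once this identification is established, the affine bound follows directly from the quantitative statement underlying Theorem~\ref{MT:indexbound}.
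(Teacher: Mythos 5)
There is a genuine gap in your proposal, beginning at the foundation. You describe the proof of Theorem~\ref{MT:indexbound} as building a map $\Phi:\Lambda^2\g\to\Gamma(\nu\Sigma)$ from the $\binom{\dim G}{2}$-dimensional space $\Lambda^2\g$ into normal sections, and you then try to bound the dimension of $\Phi(\Lambda^2\g)\cap\ker Q$. This has the source of the map backwards: the test-section construction in Proposition~\ref{P:criterion} is a linear map $X:\Hr\to\Gamma(\nu\Sigma)^\ell$ \emph{from the space $\Hr$ of harmonic $1$-forms on $\Sigma$}, with $\ell=\binom{d}{2}$ components $X_{ij}(\omega)$ indexed by pairs $i<j$ in an orthonormal basis of $\g$. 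The dimension count in Lemma~\ref{L:cond} compares $\dim\Hr=b_1(\Sigma)$ with $\ell\cdot\Ind_0(\Sigma)$, not with a quotient of $\Lambda^2\g$. Consequently, the quantity $D$ to be controlled in Proposition~\ref{P:affinebound} is the dimension of a subspace of $\Hr$, namely $\{\omega\in\Hr : J_\Sigma(X_{ij}(\omega))=0\text{ for all }i,j\}$, and the whole content of Theorem~\ref{MT:affine} is a rigidity statement about harmonic $1$-forms on $\Sigma$ (Lemma~\ref{L:rigidity}: commutation of $\nabla\omega^\#$ with $S_N$, centrality of $\II(\omega^\#,N)$, a parallel-transport identity), not about elements of $\Lambda^2\g$. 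Your proposed characterization of the degenerate directions as those $A\in\Lambda^2\g$ whose image is a sum of Killing-field restrictions does not enter the argument and, as stated, is not the right object to bound.

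For part (a), once the setup is corrected, the germ of your idea (a harmonic form in the degenerate subspace is determined by its $1$-jet at a point, and the distinct-eigenvalue hypothesis decouples the constraints) does match the paper's use of \cite[Proposition~5]{ACS17}. But the count must be carried out on $(\omega^\#(p),\nabla\omega^\#(p))$: one uses that $\omega^\#(p)$ lies in the rank-$(r-1)$ distribution $\{X : \II(X,N)=0\}$ and that the diagonal of $\nabla\omega^\#$ has at most $k-1\leq r-2$ free entries, giving $\dim\Hr_1\leq 2r-3$; the additional $\dim\mathfrak{z}(\h)$ arises separately as the rank of the linear map $\omega\mapsto\II_{p_0}(\omega^\#,N)\in\mathfrak{z}(\h)$ in Theorem~\ref{T:rigidity2}, not as a family of central elements of $\Lambda^2\g$. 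For part (b), your proposed decomposition $\Lambda^2\g=\Lambda^2\g_1\oplus(\g_1\otimes\g_2)\oplus\Lambda^2\g_2$ does not appear and would not by itself produce the bound; the paper instead splits into the case where $\Sigma$ is itself a product $M_i\times\Sigma_j$ (handled via Corollary~\ref{C:CROSS}) and the generic case, where at a point $p$ with $N_p$ tangent to neither factor there is a \emph{unique} zero-curvature $2$-plane through $N_p$, forcing the space of harmonic $\omega$ with $R(\omega^\#,N,\omega^\#,N)\equiv 0$ to be at most one-dimensional. Without recasting the argument in terms of harmonic $1$-forms and the specific rigidity conditions of Lemma~\ref{L:rigidity}, the proposed route cannot be completed.
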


Both Theorem \ref{MT:affine} and Corollary \ref{C:CROSS} are special cases of a more general, albeit technical, result --- see Theorem \ref{T:rigidity2}.

Part (a) of Theorem \ref{MT:affine} generalizes the main result of \cite{ACS17} from tori to compact symmetric spaces. Part (b) may be compared with \cite[Theorems 10,11]{ACS16}, which provide a linear bound for the index of closed minimal hypersurfaces of products of two spheres $S^a\times S^b$ with $(a,b)\neq (2,2)$.

If the hypersurface $\Sigma$ is unstable, then an affine bound  of the form $\Ind\geq C(b_1-D)$ trivially implies the linear bound $\Ind\geq \frac{C}{1+CD} b_1$. One situation where $\Sigma$ is necessarily unstable is when $M$ has positive Ricci curvature and $\Sigma$ is two-sided (for example when both $M$ and $\Sigma$ are orientable). In particular, we have:
\begin{maincorollary}
Let $M$ be an orientable compact symmetric space whose universal cover has no Euclidean factors. Then the conclusion of the Marques-Neves-Schoen Conjecture holds if $M$ is a product of two CROSSes, or if $\Sigma$ has a point where the principal curvatures are distinct.
\end{maincorollary}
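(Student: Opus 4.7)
The plan is to combine Theorem \ref{MT:affine} with the instability remark in the paragraph immediately preceding the Corollary. The two alternatives in the Corollary correspond exactly to the two cases of Theorem \ref{MT:affine}, so the only real work is to verify that $\Sigma$ is unstable and then convert the affine bound produced by Theorem \ref{MT:affine} into a linear one.

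First I would check that $M$ has strictly positive Ricci curvature. The universal cover decomposes as $\tilde M = M_1\times\cdots\times M_k$, where by hypothesis each factor is an irreducible compact symmetric space of non-Euclidean type. Each such $M_i$ is Einstein with positive Einstein constant, so $\Ric_{\tilde M}>0$; since the Ricci tensor descends to the quotient, $\Ric_M>0$. Because both $M$ and $\Sigma$ are orientable, $\Sigma$ is two-sided and admits a global unit normal $\nu$. Plugging the constant test function $f\equiv 1$ into the second variation formula for area yields
\[
\delta^2\operatorname{Area}(\nu) \;=\; -\int_\Sigma \bigl(|A|^2+\Ric(\nu,\nu)\bigr)\,dA \;<\; 0,
\]
so $\Sigma$ is unstable, and in particular $\Ind(\Sigma)\geq 1$.

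Next I would apply Theorem \ref{MT:affine}: case (b) when $M$ is a product of two CROSSes, or case (a) when $\Sigma$ has a point at which all principal curvatures are distinct. Either way one obtains $\Ind(\Sigma)\geq C(b_1(\Sigma)-D)$, where $C=\binom{\dim G}{2}^{-1}$ and $D\geq 0$ depend only on $M$. Combining this affine bound with the estimate $\Ind(\Sigma)\geq 1$ (via a short case split according as $b_1(\Sigma) \leq D+1/C$ or not) yields the linear bound
\[
\Ind(\Sigma)\;\geq\; \frac{C}{1+CD}\,b_1(\Sigma),
\]
which is the conclusion of the Marques--Neves--Schoen conjecture for $M$.

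I do not anticipate any genuine obstacle, since the argument is just a chain of invocations: the de Rham decomposition of $\tilde M$, the Einstein property of irreducible compact non-Euclidean symmetric factors, the second variation formula, Theorem \ref{MT:affine}, and an elementary interpolation between an affine and a linear bound in the presence of a single negative direction. The only mildly delicate point is checking that the constants $C$ and $D$ in Theorem \ref{MT:affine} depend solely on $M$ and not on $\Sigma$, which is transparent from their explicit expressions in terms of $\dim G$, the rank $r$, and $\dim\mathfrak{z}(\mathfrak{h})$.
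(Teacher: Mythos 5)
Your proposal is correct and follows exactly the route the paper outlines in the paragraph immediately preceding the Corollary: positivity of Ricci curvature (from the absence of Euclidean factors in the universal cover), two-sidedness from orientability of $M$ and $\Sigma$, instability via the constant test function in the second variation, Theorem~\ref{MT:affine} for the affine bound, and the elementary interpolation $\Ind\geq C(b_1-D)$ plus $\Ind\geq 1$ yielding $\Ind\geq \tfrac{C}{1+CD}b_1$. You have simply spelled out the details that the paper leaves implicit.
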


\subsection*{Acknowledgements} It is a pleasure to thank Lucas Ambrozio for many enlightening discussions during this project, especially regarding Lemma \ref{L:locsym} and Theorem \ref{MT:affine}(b). The final part of this project was carried out while the second-named author visited the University of Cologne. The second-named author wishes to thank Alexander Lytchak for his hospitality during the visit.

\subsection*{Conventions} We will denote by $R$ the curvature tensor, and follow the sign convention in \cite[page 89]{doCarmo}. Namely, 
\[ R(X,Y)Z= \nabla_Y\nabla_X Z - \nabla_X \nabla_Y Z + \nabla_{[X,Y]}Z \]
Shape operators will be defined as in \cite[page 128]{doCarmo}, that is, 
\[S_\eta(X)=-(\nabla_X\eta)^T\]

\section{Virtual immersions and their fundamental equations}

Let $(M,g)$ be a compact Riemannian manifold. We define a generalization of isometric immersions of $(M,g)$ into Euclidean space. Namely,  we consider an isometric embedding of $TM$ into a trivial bundle $M\times V$, such that the natural (flat) connection $D$ of $M\times V$ induces the Levi-Civita connection $\nabla$ on $TM$. To make computations more convenient, we phrase this definition in the following, slightly different way  --- see Proposition \ref{P:equivalentdefinition} for a proof that these two definitions coincide.
\begin{definition}
\label{D:genimm}
Let $(M,g)$ be a Riemannian manifold, and $V$ a finite-dimensional real vector space endowed with an inner product $\left<,\right>$. Let $\Omega$ be a $V$-valued one-form on $M$. We say $\Omega$ is a \emph{virtual immersion} if the following two conditions are satisfied:
\begin{enumerate}[a)]
\item $\left<\Omega_p(X),\Omega_p(Y)\right>=g_p(X,Y)$ for every $p\in M$, and every $X,Y\in T_pM$.
\item $\left<(d\Omega)_p(X,Y),\Omega_p(Z)\right>=0$ for every $p\in M$, and every $X,Y, Z\in T_pM$.
\end{enumerate}
We say two virtual immersions $\Omega_i:TM\to V_i$, $i=1,2$ are equivalent if there is a linear isometry $(V_1,\left<,\right>_1 )\to (V_2,\left<,\right>_2 )$ making the obvious diagram commute.
\end{definition}

%\begin{example}
%\begin{enumerate}[a)]
%\item
%Let $\psi: (M,g)\to V$ be an isometric immersion. Then $\Omega=d\psi$ is a virtual immersion in the above sense.
%\item
%Let $\Omega_i:TM\to V_i$ be virtual immersions, for $i=1,2$, and let $a_1,a_2\in C^\infty(M)$ such that $a_1^2+a_2^2=1$ everywhere on $M$. Then the map $\Omega_1\oplus\Omega_2: TM\to V_1\oplus V_2$ given by $v\mapsto (a\Omega_1(v),b\Omega_2(v))$ is again a virtual immersion:
%\end{enumerate}
%\end{example}

\begin{example}
Let $\psi: (M,g)\to V$ be an isometric immersion. Then $\Omega=d\psi$ is a virtual immersion in the above sense.
\end{example}

\begin{example}
Let $\Omega_i:TM\to V_i$ be virtual immersions, for $i=1,2$, and let $a_1,a_2\in C^\infty(M)$ such that $a_1^2+a_2^2=1$ everywhere on $M$. Then the map $\Omega_1\oplus\Omega_2: TM\to V_1\oplus V_2$ given by $v\mapsto (a\Omega_1(v),b\Omega_2(v))$ is again a virtual immersion. This follows from a straight-forward computation.
\end{example}

Given a virtual immersion $\Omega$, we shall identify $TM$ with the image of the map $(p,v)\mapsto (p, \Omega_p(v))$ in $M\times V$.

Condition (a) in Definition \ref{D:genimm} yields a decomposition of the trivial vector bundle $M\times V$ as a direct sum $M\times V= TM\oplus \nu M$ of $TM\subset M\times V$ and its orthogonal complement, the normal bundle $\nu M$. Given $(p,X)\in M\times V$, we shall write $X=X^T+X^\perp$ for the decomposition into the tangent and normal parts.

The natural connection $D$ on $M\times V$ induces connections $D^T$ (respectively  $D^\perp$, the \emph{normal connection}) on $TM$ (resp. $\nu M$), given by $D^T_X Y=(D_X Y)^T$ (resp. $D^\perp_X \eta=(D_X \eta)^\perp$). Here $X,Y$ are vector fields on $M$, while $\eta$ is a section of the normal bundle.

\begin{proposition}
\label{P:equivalentdefinition}
Let $\Omega$ be a $V$-valued one-form on $M$  satisfying condition (a) in Definition \ref{D:genimm}. Then, condition (b) is equivalent to $D^T=\nabla$.
\end{proposition}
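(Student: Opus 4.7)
The plan is to decode condition (b) using the flat connection $D$ and reduce to the uniqueness of the Levi-Civita connection. First I would rewrite $d\Omega$ in terms of $D$: for any $V$-valued one-form on $M$ one has
\[
d\Omega(X,Y)=D_X\bigl(\Omega(Y)\bigr)-D_Y\bigl(\Omega(X)\bigr)-\Omega([X,Y]),
\]
since $D$ is just directional differentiation in the trivial bundle $M\times V$. Under the identification of $TM$ with the subbundle $\Omega(TM)\subset M\times V$, this becomes $d\Omega(X,Y)=D_XY-D_YX-[X,Y]$, where $[X,Y]$ remains tangent to $M$.

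Next I would pair this with a tangent vector $Z=\Omega(Z)$. Since $[X,Y]$ is already tangent and $Z$ is tangent, one has $\langle D_XY,Z\rangle=\langle D^T_XY,Z\rangle$ and similarly for $D_YX$. Consequently, condition (b) is equivalent to the tangential identity
\[
D^T_XY-D^T_YX-[X,Y]=0,
\]
i.e. to $D^T$ being \emph{torsion-free}.

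Separately, condition (a) forces $D^T$ to be \emph{metric}. Indeed, the flat connection $D$ is compatible with the constant inner product $\langle,\rangle$ on $M\times V$, so for tangent vector fields $X,Y,Z$,
\[
Xg(Y,Z)=X\langle \Omega(Y),\Omega(Z)\rangle=\langle D_XY,Z\rangle+\langle Y,D_XZ\rangle=g(D^T_XY,Z)+g(Y,D^T_XZ),
\]
where we used (a) twice to replace $\langle,\rangle$ with $g$ on tangent vectors.

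Finally, by the fundamental theorem of Riemannian geometry there is exactly one torsion-free, metric connection on $TM$, namely $\nabla$. Combining the two steps above: given (a), $D^T$ is always metric, so (b) holds if and only if $D^T$ is also torsion-free, if and only if $D^T=\nabla$. The only subtlety is bookkeeping the identification $TM\cong\Omega(TM)$ when writing $d\Omega$ in terms of $D$; once that is pinned down the rest is a direct consequence of compatibility of $D$ with the inner product and the uniqueness of the Levi-Civita connection.
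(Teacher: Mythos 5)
Your proof is correct and follows essentially the same route as the paper: express $d\Omega$ via the flat connection, take the tangential part to see that (b) amounts to $D^T$ being torsion-free, note that (a) gives metric compatibility, and invoke uniqueness of the Levi-Civita connection. The only difference is that you spell out the metric-compatibility computation that the paper states without proof.
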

\begin{proof}
Recall that
\begin{equation}
\label{E:dOmega}
 d\Omega(X,Y)=
D_X Y-D_Y X-[X,Y] \end{equation}
so that taking the tangent part yields
\[  d\Omega(X,Y)^T= D^T_X Y -D^T_Y X - [X,Y] .\]
Condition (a) implies that $D^T$ is compatible with the metric $g$, and by the above formula Condition (b) is equivalent to $D^T$ being torsion-free. Since these two properties characterize the Levi-Civita connection, the result follows.
\end{proof}

\begin{definition}
Let $\Omega$ be a $V$-valued virtual immersion, $X,Y$ be smooth vector fields on $M$, and $\eta$ a smooth section of $\nu M$. Define the \emph{second fundamental form} of $\Omega$ by
\[ \II(X,Y)=
(D_X Y)^\perp
 \]
 and the \emph{shape operator} in the direction of a normal vector $\eta$ by
 \[ S_\eta(X)=-(D_X \eta)^T.\]
\end{definition}
Note that the second fundamental form and the shape operator are tensors. In view of Proposition \ref{P:equivalentdefinition}, we may write
\begin{align}
D_X Y &=\nabla_X Y+ \II(X,Y) \\
D_X \eta &= -S_\eta X +D^\perp_X \eta
\end{align}

\begin{remark}
If $\Omega$ is a virtual immersion, then its second fundamental form is \emph{symmetric} if and only if $d\Omega=0$, or, equivalently, $\Omega$ locally comes from an isometric immersion of $M$ into Euclidean space. Indeed, by  \eqref{E:dOmega}, the normal part of $d\Omega$ equals $\II(X,Y) -\II(Y,X)$. 
\end{remark}

The fundamental equations of the extrinsic geometry of submanifolds of Euclidean space  carry over in similar form to virtual immersions. In fact, following almost verbatim the computations in, for example, \cite[Ch. 6.3]{doCarmo}, one gets the following.
\begin{proposition}
\label{P:fundamental}
Let $\Omega$ be a virtual immersion of the Riemannian manifold $(M,g)$ with values in $V$. Then the following identities hold:
\begin{enumerate}[a)]
\item Weingarten's equation
 \[ \left< S_\eta(X), Y \right>= \left< \II(X,Y),\eta \right> \]
\item Gauss' equation
\[R(X,Y,Z,W)=\left<\II(Y,W),\II(X,Z)\right>-\left<\II(X,W),\II(Y,Z)\right>\]
\item Ricci's equation
\[  \left< R^\perp(X,Y)\eta,\zeta\right>=-\left<(S_\eta^tS_\zeta -S_\zeta^t S_\eta)X,Y\right>\]
\item Codazzi's equation
\[ \left<(D_X \II) (Y,Z) ,\eta \right>= \left<(D_Y \II) (X,Z) ,\eta \right>. \]
\end{enumerate}
\end{proposition}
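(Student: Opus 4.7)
The plan is to mimic the classical derivation of the Gauss, Codazzi, and Ricci equations for submanifolds of Euclidean space. The key ingredient there is the flatness of the Euclidean connection; here its role is played by the canonical flat metric connection $D$ on the trivial bundle $M\times V$. All four identities arise by expanding either $0 = X\langle \eta, Y\rangle$, or the flatness identity
\[ D_Y D_X \xi - D_X D_Y \xi + D_{[X,Y]}\xi = 0 \]
(written in the sign convention of the excerpt), and then separating into tangent and normal parts using the basic formulas $D_X Y = \nabla_X Y + \II(X,Y)$ and $D_X \eta = -S_\eta X + D^\perp_X \eta$.

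Weingarten's equation (a) drops out of $X\langle \eta, Y\rangle = \langle D_X \eta, Y\rangle + \langle \eta, D_X Y\rangle = 0$ after substituting the two decompositions and keeping only the surviving pairings. For Gauss (b) and Codazzi (d), I would apply the flatness identity to a tangent field $Z$. Expanding $D_X D_Y Z$ and $D_Y D_X Z$ splits the identity into a tangent and a normal part; the tangent part reads $R(X,Y)Z = S_{\II(X,Z)}Y - S_{\II(Y,Z)}X$, which becomes Gauss after pairing with $W$ and applying Weingarten, while the normal part, once regrouped using $\nabla_X Y - \nabla_Y X = [X,Y]$ and the Leibniz-type formula $(D_X \II)(Y,Z) = D^\perp_X \II(Y,Z) - \II(\nabla_X Y, Z) - \II(Y, \nabla_X Z)$, collapses precisely to Codazzi. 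For Ricci (c), I would apply the flatness identity to a normal section $\eta$; its normal part produces $R^\perp(X,Y)\eta = \II(Y, S_\eta X) - \II(X, S_\eta Y)$, and pairing with $\zeta$ followed by two applications of Weingarten delivers the stated formula.

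The one point at which I would exercise caution is that classical derivations sometimes implicitly use the symmetry of $\II$, which here can fail (cf.\ the remark following Definition \ref{D:genimm}). Inspection shows that none of the steps above actually depend on this symmetry: the manipulation producing Codazzi uses only the torsion-freeness $\nabla_X Y - \nabla_Y X = [X,Y]$, and both Gauss and Ricci are tensorial identities in which the slot order of $\II$ can simply be tracked throughout. I expect this to be the only subtlety, with no substantive obstacle beyond careful bookkeeping.
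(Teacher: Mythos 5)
Your proposal is correct and follows essentially the same route as the paper, which itself simply points the reader to the standard computations in do Carmo (Ch.\ 6.3) and notes they carry over verbatim. All four identities do indeed fall out of the flatness identity $D_Y D_X\xi - D_X D_Y\xi + D_{[X,Y]}\xi = 0$ split into tangent and normal parts, together with $0 = X\langle\eta, Y\rangle$ for Weingarten; and you are right that nothing in the derivation uses symmetry of $\II$ — the only care required is that $S_\eta$ is no longer self-adjoint, which is exactly why the transposes $S_\eta^t$, $S_\zeta^t$ appear in Ricci's equation as stated.
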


\section{Index of minimal hypersurfaces}

In this section we show that the method of proof used in \cite{Savo10,ACS16,ACS17} applies not only to immersions of the ambient manifold $M$ into Euclidean space, but also to virtual immersions. The statements that we need, along with their proofs, are essentially the same as in the classical case. We include them here for the sake of completeness and to fix notations. 

Let $(M,g)$ be a Riemannian manifold, and $\Sigma\to M$ be a closed minimal immersed hypersurface. Recall that the Jacobi operator $J_\Sigma$ is the self-adjoint operator on the space $\Gamma(\nu\Sigma)$ of  sections of the normal bundle of $\Sigma$, and it is defined by:
\begin{equation}
J_\Sigma(X)= \Delta^{\perp} X +\big(|A|^2+\Ric^M(N,N)\big)X
\nonumber
\end{equation}
where $N$ is a choice of (locally defined) unit normal vector field to $\Sigma$, $\Delta^\perp$ is the normal Laplacian, and $A$ is the second fundamental form of the immersion $\Sigma\to M$.
The Morse index of $\Sigma$ (resp. the nullity of $\Sigma$) is the index (resp. the dimension of the kernel) of the quadratic form
\[ Q(X,X)=-\int_\Sigma J_\Sigma(X)\cdot X= \int_\Sigma |\nabla^{\perp} X|^2-\big( |A|^2+\Ric^M(N,N) \big) |X|^2.\]

The next lemma is equivalent to \cite[Proposition 3]{ACS16} (see also \cite{Savo10} and \cite[Theorem 16]{Ros06}).
\begin{lemma}\label{L:cond}
Suppose $\Hr$ is a vector space of dimension $b$, and let
\[
X:\Hr\to \Gamma(\nu\Sigma)^\ell,\qquad X(\omega)=(X_1(\omega),\ldots X_\ell(\omega))
\]
be a linear map such that for all $\omega\in \Hr$,
\begin{equation}\label{E:cond}
\sum_{i=1}^\ell Q(X_i(\omega),X_i(\omega))\leq 0\,\, (\textrm{resp.} <0).
\end{equation}
Then $\Ind_0(\Sigma)\geq {1\over \ell}b$ (resp. $\Ind(\Sigma)\geq {1\over \ell}b$).
\end{lemma}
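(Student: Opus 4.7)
The plan is to follow the standard projection argument of Ros and Savo. Let $E\subset\Gamma(\nu\Sigma)$ be the $L^2$-orthogonal sum of the eigenspaces of $J_\Sigma$ with non-negative eigenvalue. Since $Q(Y,Y)=-\int_\Sigma J_\Sigma Y\cdot Y$, its dimension is exactly $\Ind_0(\Sigma)$, and $Q$ is strictly positive definite on the $L^2$-orthogonal complement $E^\perp$. Let $P$ denote the $L^2$-orthogonal projection onto $E$, and consider the linear map
\[
\Phi:\Hr\to E^\ell,\qquad \Phi(\omega)=\bigl(P(X_1(\omega)),\ldots,P(X_\ell(\omega))\bigr).
\]
The core step is to show that $\Phi$ is injective, since this immediately gives $b=\dim\Hr\le\dim E^\ell=\ell\,\Ind_0(\Sigma)$, which is the desired bound.

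For injectivity, suppose $\Phi(\omega)=0$. Then each $X_i(\omega)$ lies in $E^\perp$, so $Q(X_i(\omega),X_i(\omega))\ge 0$. Combined with the hypothesis $\sum_i Q(X_i(\omega),X_i(\omega))\le 0$, every summand must vanish, and positive definiteness of $Q$ on $E^\perp\setminus\{0\}$ forces $X_i(\omega)=0$ for all $i$, i.e.\ $X(\omega)=0$. Replacing $\Hr$ by $\Hr/\ker X$ if necessary (which only strengthens the conclusion), we may assume $X$ is injective, and hence $\omega=0$.

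For the strict variant, let $E'\subset\Gamma(\nu\Sigma)$ be the span of eigensections of $J_\Sigma$ with strictly positive eigenvalue, so that $\dim E'=\Ind(\Sigma)$, $Q<0$ on $E'\setminus\{0\}$, and $Q\ge 0$ on $(E')^\perp$ (vanishing precisely on $\ker J_\Sigma$). Define $\Phi'$ analogously using projection onto $E'$. If $\Phi'(\omega)=0$, then $\sum_i Q(X_i(\omega),X_i(\omega))\ge 0$; the strict hypothesis rules this out for $\omega\ne 0$, so $\Phi'$ is injective and $b\le\ell\,\Ind(\Sigma)$.

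The only point requiring any care is keeping track of where $\ker J_\Sigma$ sits relative to the decomposition — it must be absorbed into $E$ (on which $Q\le 0$) in the extended-index case, and into $(E')^\perp$ (on which $Q\ge 0$) in the strict case — so that the signs needed for the projection argument are correctly aligned. The remaining computation is entirely routine.
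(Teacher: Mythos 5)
Your proof follows the same Ros--Savo projection argument as the paper, and with the correct sign convention: given $Q=-\int_\Sigma J_\Sigma(\cdot)\cdot(\cdot)$, the sum of eigenspaces of $J_\Sigma$ with non\emph{-negative} eigenvalue has dimension $\Ind_0(\Sigma)$, as you write (the paper's proof says ``non-positive,'' which appears to be a sign slip). The one point to flag is your parenthetical ``(which only strengthens the conclusion)'' when passing from $\Hr$ to $\Hr/\ker X$: modding out by $\ker X$ lowers $b$, so it \emph{weakens} the bound $\Ind_0\geq b/\ell$ rather than strengthening it, and therefore does not reduce the general case to the injective one. The lemma as written in fact tacitly assumes $\ker X=0$ (otherwise $X\equiv 0$ with $b$ large is a counterexample to the non-strict version). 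The paper's own proof carries the same unstated assumption: it asserts $Q(X_i(\omega),X_i(\omega))>0$ for $\omega\in\ker\Phi$ nonzero, which fails if $X_i(\omega)=0$. In the strict variant injectivity of $X$ is automatic from the hypothesis; in the non-strict variant it must be verified in the application, and it does hold there, since $X_{ij}(\omega)=\langle\omega^\#\wedge N,\theta_i\wedge\theta_j\rangle N$ vanishes for all $i,j$ only if $\omega=0$. Apart from this parenthetical, your argument and the paper's are the same.
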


\begin{proof}
Let $E^m\subset \Gamma(\nu\Sigma)$ be the sum of eigenspaces of $J_\Sigma$ with non-positive (resp. negative) eigenvalue, and let
\[
\Phi:\Hr\to \Hom(E,\R^\ell),\quad \omega\mapsto\Big(Y\mapsto \big(\left< Y,X_1(\omega)\right>_{L^2},\ldots \left< Y,X_\ell(\omega)\right>_{L^2}\big)\Big)
\]
where $\left< X,Y\right>_{L^2}:=\int_{\Sigma}\left< X,Y\right>$. Notice that $m=\Ind_0(\Sigma)$ (resp. $m=\Ind(\Sigma)$) and in either case one must prove $b\leq \ell m$.

By contradiction, if $b>\ell m=\dim \Hom(E,\R^\ell)$, then by dimension reasons $\ker\Phi\neq0$ and, given $\omega\in \ker \Phi$ nonzero, it follows that $X_i(\omega)\perp E$ for all $i=1,\ldots \ell$. Therefore $Q(X_i(\omega),X_i(\omega))>0$ (resp. $Q(X_i(\omega),X_i(\omega))\geq 0$) and, taking the sum over all $i=1,\ldots \ell$ one gets the desired contradiction with equation \eqref{E:cond}.
\end{proof}

Suppose $M$ is endowed with a virtual immersion $\Omega:TM\to V$, with second fundamental form $\II$. For any point $p\in M$ and vectors $x,y\in T_pM$, define
\begin{align}
\label{E:ACSqty}
\ACS(x,y):=& |y|^2\operatorname{tr}\Big(|\II(\cdot\,,x) |^2-R(\cdot\,,x,\cdot\,,x)\Big)+  |x|^2\operatorname{tr}\Big(|\II(\cdot\,,y) |^2-R(\cdot\,,y,\cdot\,,y)\Big)\\
&-\Big(|\II(x,y)|^2-R(x,y,x,y)\Big)- |x|^2 |\II(y,y)|^2\nonumber 
 \end{align} 
This quantity appears naturally in the proof of Proposition \ref{P:criterion}, more specifically in equation \eqref{E:ACS}. 

The next result is equivalent (in the case of classical immersions) to Proposition 2 in \cite{ACS16}:
\begin{proposition}\label{P:criterion}
Suppose $M$ admits a virtual immersion $\Omega:TM\to V$, $\dim V=d$, such that, for every point $p\in M$ and every $x,y\in T_pM$ orthonormal vectors, $ACS(x,y)\leq 0$ (resp. $<0$).

Then, for every closed minimal immersed hypersurface $\Sigma\to M$,
\[
\Ind_0(\Sigma)\geq {d\choose 2}^{-1}b_1(\Sigma) \quad\left(\textrm{resp. }\Ind(\Sigma)\geq {d\choose 2}^{-1}b_1(\Sigma)\right).
\]
\end{proposition}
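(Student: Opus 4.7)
The plan is to apply Lemma~\ref{L:cond} with $\Hr$ equal to the space of harmonic $1$-forms on $\Sigma$, so that $b=b_1(\Sigma)$ by Hodge theory, and with $\ell=\binom{d}{2}$ test sections per harmonic form, indexed by a basis of $\Lambda^{2}V$. After passing to the oriented double cover if necessary, $\nu\Sigma$ is trivialized by a unit normal $N$, so every section takes the form $\phi N$ for a scalar function $\phi$ on $\Sigma$. Given $\omega\in\Hr$, identified via the metric with $W\in T\Sigma$, and a fixed orthonormal basis $\{e_1,\ldots,e_d\}$ of $V$, the natural candidate is
\begin{equation*}
\phi_{ij}\,=\,\bigl\langle e_i,\Omega(W)\bigr\rangle\bigl\langle e_j,\Omega(N)\bigr\rangle-\bigl\langle e_j,\Omega(W)\bigr\rangle\bigl\langle e_i,\Omega(N)\bigr\rangle,\qquad X_{ij}(\omega)\,=\,\phi_{ij}\,N,\quad 1\le i<j\le d,
\end{equation*}
so that $\{X_{ij}\}$ depends linearly on $\omega$ and a standard Plücker-type identity yields $\sum_{i<j}\phi_{ij}^{2}=|\Omega(W)\wedge\Omega(N)|^{2}=|W|^{2}$, using $|\Omega(\cdot)|=|\cdot|$ and $W\perp N$.

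The heart of the argument is the matching computation for $\sum_{i<j}|\nabla\phi_{ij}|^{2}$. Differentiating along directions tangent to $\Sigma$ and expanding via $D_{X}\Omega(W)=\nabla_{X}W+\II(X,W)$ and $D_{X}\Omega(N)=-B(X)+\II(X,N)$ (where $B$ is the shape operator of $\Sigma\subset M$), I would apply the Plücker identity once more, now to the resulting $\Lambda^{2}V$-valued derivative, to obtain a pointwise polynomial expression in $|\nabla_{e_k}W|^{2}$, $|\II(e_k,W)|^{2}$, $|\II(e_k,N)|^{2}$, $|B(e_k)|^{2}$ and the cross-terms $\langle BW,e_k\rangle$. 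Inserting this into $\sum_{i<j}Q(X_{ij},X_{ij})$, applying Bochner's formula $\int_{\Sigma}|\nabla W|^{2}=-\int_{\Sigma}\Ric^{\Sigma}(W,W)$ for the harmonic $W$, and then rewriting $\Ric^{\Sigma}$ in terms of $\Ric^{M}$ and $B$ via the Gauss equation for $\Sigma\subset M$ (using minimality $\operatorname{tr}B=0$), the terms should recombine so that the integrand becomes precisely $\tfrac12 |W|^{2}\ACS(W/|W|,N)$ plus manifestly non-positive corrections, most notably $-\tfrac12|BW|^{2}$. The hypothesis $\ACS\le0$ (resp.\ $<0$) on orthonormal pairs then forces $\sum_{i<j}Q(X_{ij},X_{ij})\le0$ (resp.\ $<0$ whenever $\omega\neq0$), and Lemma~\ref{L:cond} concludes.

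The main obstacle I expect is the careful bookkeeping of the second-fundamental-form contributions. Because $\II$ need not be symmetric for a general virtual immersion, terms involving $\II(N,W)$ and $\II(W,N)$ enter separately and are a priori distinct; verifying that they recombine correctly into the $\ACS$ expression---either pointwise or after integration by parts using Codazzi's equation (Proposition~\ref{P:fundamental}(d))---is the delicate step. Apart from this, the computation is a largely mechanical extension to virtual immersions of the classical arguments in \cite{Savo10,ACS16}, with the Gauss equation of Proposition~\ref{P:fundamental}(b) playing the role of the ambient curvature identity for an isometric immersion into Euclidean space.
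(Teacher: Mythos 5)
Your approach is essentially the same as the paper's: the same test sections $X_{ij}(\omega)=\langle\omega^\#\wedge N,\theta_i\wedge\theta_j\rangle N$, the same Pl\"ucker simplification, and the same reduction via Lemma~\ref{L:cond} to the $\ACS$ quantity, whose sign hypothesis closes the argument. Two small corrections: passing to the oriented double cover is unnecessary, since your $\phi_{ij}N$ is invariant under $N\mapsto -N$ and hence already globally defined on a one-sided $\Sigma$ (as the paper observes); and the end result of the computation is the clean equality $\sum_{i<j}Q(X_{ij}(\omega),X_{ij}(\omega))=\int_\Sigma\ACS(\omega^\#,N)$, with no factor of $\tfrac12$ and no leftover $-|BW|^2$ term --- after applying Bochner's formula and the Gauss equation for $\Sigma\subset M$, all shape-operator contributions are absorbed into the $\ACS$ expression.
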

\begin{proof}
Let $\Sigma\to M$ be a closed, immersed, minimal hypersurface. Also, let $\theta_1,\ldots \theta_d$ denote an orthonormal basis of $V$.

Locally around every point of $\Sigma$, it is possible to choose a unit normal vector $N$, which is unique up to sign. Given indices $1\leq i<j\leq d$ and a harmonic 1-form $\omega$ on $\Sigma$, let $\omega^\#$ denote the vector field on $\Sigma$ such that $\scal{\omega^\#,Y}=\omega(Y)$ for any vector field $Y$ in $\Sigma$, and define
\begin{equation}
\label{E:variation}
X_{ij}(\omega):=\left< \omega^\#\wedge N, \theta_i\wedge \theta_j\right> N.
\end{equation}
Notice that the definition of $X_{ij}(\omega)$ does not depend on the specific choice of unit normal vector $N$, and therefore it defines a global section of $\nu\Sigma$, even when $\Sigma$ is 1-sided and there is no global unit vector field $N$ defined on the whole of $\Sigma$.

Letting $\Hr$ denote the space of harmonic 1-forms on $\Sigma$ and letting $\ell={d\choose 2}$, this defines a linear map
\[
X:\Hr\to \Gamma(\nu\Sigma)^\ell, \quad X(\omega)=\big(X_{ij}(\omega)\big)_{i,j}.
\]
The idea is to apply Lemma \ref{L:cond} to get the result. In order to do this, we must compute $\sum_{i<j}Q(X_{ij}(\omega),X_{ij}(\omega))$:
\begin{align*}
Q(X_{ij}(\omega),X_{ij}(\omega))=&\int_\Sigma |\nabla^\perp X_{ij}(\omega)|^2-\big( |A|^2+\Ric^M(N,N) \big) |X_{ij}(\omega)|^2\\
=&\sum_{k=1}^{n-1}\int_\Sigma \left<D_{e_k}(\omega^\#\wedge N), \theta_i\wedge \theta_j\right>^2\\
&-\int_\Sigma\big( |A|^2+\Ric^M(N,N) \big)\left<\omega^\#\wedge N, \theta_i\wedge \theta_j\right>^2
\end{align*}
where $e_1,\ldots e_{n-1}$ denotes a (local) orthonormal frame of $T\Sigma$. Summing over all indices $i, j$, one obtains
\begin{align*}
\sum_{i<j}Q(X_{ij}(\omega),X_{ij}(\omega))=&\sum_{k=1}^{n-1}\int_\Sigma |D_{e_k}(\omega^\#\wedge N)|^2-\big( |A|^2+\Ric^M(N,N) \big) |\omega^\#\wedge N|^2.
\end{align*}
Using the equality $D_{e_i}(\omega^\#\wedge N)=(D_{e_i}\omega^\#)\wedge N+ \omega^\#\wedge (D_{e_i} N)$ and $D_{e_i}=\nabla_{e_i}+\II(e_i, \cdot\,)$, the computations from this point on follow verbatim those in Proposition 2 of \cite{ACS16}. These show that
\begin{equation}\label{E:ACS}
\sum_{i<j}Q(X_{ij}(\omega),X_{ij}(\omega))=\int_\Sigma \ACS(\omega^\#,N)
\end{equation}
which is non-positive (resp. negative) by assumption, and the result now follows from Lemma \ref{L:cond}.
\end{proof}

When the ACS quantity \eqref{E:ACSqty} is non-positive, Proposition \ref{P:criterion} yields a linear bound on the extended index. Sometimes one may also obtain a lower bound on the index, which is in general affine in $b_1$ instead of linear. This is described in the following ``rigidity'' statement: (compare with the proof of Theorem 1.1 in \cite[page 8]{ACS17})
\begin{proposition}\label{P:affinebound}
	Suppose $M$ admits a virtual immersion $\Omega:TM\to V$, $\dim V=d$, such that for every point $p\in M$ and every $x,y\in T_pM$ orthonormal vectors, $ACS(x,y)\leq 0$. Let $\Sigma\to M$ be a closed minimal immersed hypersurface, and let $D$ denote the dimension of the space of harmonic one-forms $\omega$ on $\Sigma$ such that $J_\Sigma (X_{ij}(\omega))=0$ for all $i,j$, where $X_{ij}(\omega)$ is defined in \eqref{E:variation}. Then
	\[
	\Ind(\Sigma)\geq {d\choose 2}^{-1}(b_1(\Sigma)-D) .
	\]
\end{proposition}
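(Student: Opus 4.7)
The plan is to adapt the dimension-counting argument of Lemma \ref{L:cond} so as to bound the index, rather than the extended index, at the cost of a defect of $D$. Let $E \subset \Gamma(\nu\Sigma)$ be the sum of the eigenspaces of $J_\Sigma$ with negative eigenvalue, so that $\dim E = \Ind(\Sigma)$. Set $\ell = \binom{d}{2}$ and consider the linear map
\[ \Phi : \Hr \to \Hom(E, \R^\ell), \qquad \omega \mapsto \bigl( Y \mapsto \bigl( \scal{Y, X_{ij}(\omega)}_{L^2} \bigr)_{i<j} \bigr), \]
where $\Hr$ is the space of harmonic one-forms on $\Sigma$ (of dimension $b_1(\Sigma)$) and $X_{ij}(\omega)$ is defined in \eqref{E:variation}.

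The crux is to show that $\ker \Phi \subseteq \Hr_0$, where $\Hr_0 \subseteq \Hr$ denotes the $D$-dimensional subspace from the statement. Take $\omega \in \ker \Phi$. By definition of $\Phi$, each $X_{ij}(\omega)$ is $L^2$-orthogonal to $E$, hence lies in the sum of the non-negative eigenspaces of $J_\Sigma$, so $Q(X_{ij}(\omega), X_{ij}(\omega)) \geq 0$ for every pair $i<j$. On the other hand, identity \eqref{E:ACS} together with the hypothesis $\ACS \leq 0$ gives $\sum_{i<j} Q(X_{ij}(\omega), X_{ij}(\omega)) \leq 0$. Non-negative summands adding to a non-positive number must each vanish, so $Q(X_{ij}(\omega), X_{ij}(\omega)) = 0$ for every $i,j$. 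Since $X_{ij}(\omega)$ already lies in the non-negative part of the spectrum of the self-adjoint operator $J_\Sigma$, the diagonalisation of $Q$ in that eigenspace decomposition forces $X_{ij}(\omega) \in \ker J_\Sigma$; that is, $\omega \in \Hr_0$.

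A rank-nullity count then completes the argument:
\[ b_1(\Sigma) = \dim \Hr = \dim \ker \Phi + \dim \operatorname{im} \Phi \leq D + \ell \cdot \Ind(\Sigma), \]
which rearranges to the claimed affine bound. The only non-routine ingredient is the spectral step identifying the zero locus of $Q$ on the complement of $E$ with $\ker J_\Sigma$; everything else is a direct reprise of the contradiction argument in Lemma \ref{L:cond}, with "non-positive" eigenspaces there replaced here by strictly negative ones, and the resulting deficit absorbed into $\Hr_0$.
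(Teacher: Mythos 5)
Your proposal is correct and follows essentially the same route as the paper's proof: both take $E$ to be the strictly negative eigenspace of $J_\Sigma$, use the inequality $\sum_{i<j} Q(X_{ij}(\omega),X_{ij}(\omega)) \le 0$ from \eqref{E:ACS} together with the termwise non-negativity coming from $X_{ij}(\omega)\perp E$, and then the spectral observation that a section orthogonal to $E$ with vanishing $Q$-form must lie in $\ker J_\Sigma$. The only (cosmetic) difference is that you phrase the final count as rank-nullity applied to $\Phi$, whereas the paper restricts $\Phi$ to a complement of $\Hr_0$ and runs a contradiction argument; these are interchangeable.
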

\begin{proof}
This proof is similar to the proofs of Lemma \ref{L:cond} and Proposition \ref{P:criterion}, so we use the same notations and only indicate the necessary modifications.

Let $\Hr$ be the space of harmonic $1$-forms on $\Sigma$, and $\Hr'\subset \Hr$ the orthogonal complement to the space of harmonic $1$-forms $\omega$ such that $J_\Sigma(X_{ij}(\omega))=0$ for all $i,j$. Thus $\dim(\Hr')=b_1-D$.

Let $m=\Ind(\Sigma)$, $\ell={d\choose 2}$ and consider the restriction of $\Phi:\Hr\to \Hom(E,\R^\ell)$ to $\Hr'$, where $E$ denotes the space spanned by the eigenfunctions of $J_\Sigma$ associated to negative eigenvalues. Assuming for a contradiction that $b_1-D>\ell m$ yields a non-zero $\omega\in\Hr'$ such that $\Phi(\omega)=0$. Then $Q(X_{ij}(\omega),X_{ij}(\omega))\geq 0$ for all $i,j$, and, since $\ACS\leq 0$, we have $Q(X_{ij}(\omega),X_{ij}(\omega))=0$ for all $i,j$. But $X_{ij}(\omega)$ is a linear combination of eigenfunctions with non-negative eigenvalues, so  $J_\Sigma(X_{ij}(\omega))$ vanishes identically for all $i,j$, a contradiction.
\end{proof}

\section{Skew-symmetric second fundamental form}

In this section we define a natural virtual immersion with skew-symmetric second fundamental form associated to any compact symmetric space, and use it to prove Theorem \ref{MT:indexbound}.  Then we show that this is in fact the unique example of a virtual immersion with skew-symmetric second fundamental form, thus proving Theorem \ref{MT:classification}.

We start by fixing some notations (see for example \cite[Chapter 7]{Besse} for general information about symmetric spaces). Let $(M,g)$ be a compact symmetric space, and $p_0\in M$. Choose a closed transitive subgroup $G$ of the isometry group of $M$ such that $(G,H)$ is a symmetric pair, where $H=G_{p_0}$. Denote by $\pi:G\to G/H=M$ the natural projection $g\mapsto \llbracket g \rrbracket=gp_0$.
Choose an $\Ad_G$-invariant metric $\left<,\right>$ on the Lie algebra $\g$ such that $\pi$ is a Riemannian submersion, and let $\m \subset\g$ be the orthogonal complement of $\h$ with respect to this metric. Then $\m$ is isometric to $T_{p_0}M$ via the differential of $\pi$ at the identity $e\in G$; $\h$ and $\m$ are $\Ad_H$-invariant; and they satisfy $[\m,\m]=\h$.

Define $G\times_H \m$ as the quotient of $G\times\m$ by the action of $H$ given by $h.(g,X)=(gh^{-1}, \Ad_h X)$, and denote by $\llbracket g,X \rrbracket$
the image of $(g,X)\in G\times\m$ under the quotient map. $G\times_H \m$ comes with a natural action by $G$, defined by $g'.\llbracket g,X \rrbracket=\llbracket g'g,X \rrbracket$. Identify the tangent bundle $TM$ with $G\times_H\m$ by extending the isomorphism $\m\to T_{p_0} M$ to the $G$-equivariant isomorphism
\[ \llbracket g,X\rrbracket\mapsto dg(X).\]
With this identification, we define a $\g$-valued one-form $\Omega_0$ on $M$ by
\begin{equation}
\label{E:naturalgenimm}
\Omega_0(\llbracket g,X \rrbracket)=\Ad_gX.
\end{equation}
%Note that $\Omega_0$ is $G$-equivariant, and in particular does not depend on the choice of $p_0$, up to equivalence in the sense of Definition \ref{D:genimm}. It will follow from Theorem \ref{MT:classification} that  $\Omega_0$ is also independent of the choices of $G$ and the inner product $\left<,\right>$.

\begin{lemma}
\label{L:Omega0}
The $\g$-valued one-form $\Omega_0$ defined in Equation \eqref{E:naturalgenimm} is a virtual immersion. At $\llbracket g\rrbracket\in M$, the tangent and normal spaces are $\Ad_g\m$ and $\Ad_g \h$, respectively. The second fundamental form is given by 
\[ \II\big(\llbracket g,X\rrbracket,\llbracket g,Y\rrbracket\big)=\Ad_g([X,Y]) \]
and the curvature of the normal connection is given by
\[ R^\perp(X,Y)\eta=[[X,Y],\eta] .\]
\end{lemma}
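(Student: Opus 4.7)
My plan is to establish the four claims in order, reducing each to a calculation at the basepoint $p_0 = \llbracket e\rrbracket$ using the $G$-equivariance of $\Omega_0$ (namely $\Omega_0\circ dg = \Ad_g\circ\Omega_0$). Condition (a) of Definition \ref{D:genimm} and the tangent/normal decomposition reduce to the basepoint: there $\Omega_0$ is simply the inclusion $\m\hookrightarrow\g$, which is an isometry by the choice of metric, with orthogonal complement $\h$. Propagating by the isometry $\Ad_g$ of $(\g,\scal{,})$ yields the claimed decomposition $\Ad_g\m\oplus\Ad_g\h$ of $\g$ into tangent and normal parts at $\llbracket g\rrbracket$.

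To verify condition (b) and simultaneously compute $\II$, I would fix $X,Y\in\m\cong T_{p_0}M$, extend them to vector fields $\tilde X,\tilde Y$ by parallel transport along radial geodesics from $p_0$, and use the key symmetric-space facts that $\gamma_X(t)=\exp(tX)p_0$ is the geodesic with initial velocity $X$ and that parallel transport along it is realized by the differential $d(\exp(tX))$. Under the identification $TM\cong G\times_H\m$ this reads $\tilde Y(\gamma_X(t)) = \llbracket\exp(tX), Y\rrbracket$, and a one-line differentiation gives $D_X(\Omega_0\tilde Y)|_{p_0} = [X,Y]\in[\m,\m]\subset\h$. Symmetrizing in $X,Y$ and combining with $[\tilde X,\tilde Y]|_{p_0}=0$ (torsion-freeness of $\nabla$ and vanishing of the radial-parallel $\nabla_X\tilde Y$ and $\nabla_Y\tilde X$ at $p_0$), equation \eqref{E:dOmega} yields $d\Omega_0(X,Y)|_{p_0}=2[X,Y]\in\h$, confirming condition (b). Proposition \ref{P:equivalentdefinition} then identifies $D^T$ with $\nabla$, so $\II(X,Y)=(D_X\tilde Y)^\perp = [X,Y]$ at $p_0$; equivariance propagates this to the stated formula at $\llbracket g\rrbracket$.

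For the normal curvature I would use Ricci's equation rather than compute $R^\perp$ directly. Weingarten's equation and $\Ad$-invariance yield $S_\eta X=[\eta,X]$ for $\eta\in\h$, $X\in\m$, with $S_\eta$ skew-symmetric on $\m$. Thus at $p_0$,
\[
\scal{R^\perp(X,Y)\eta,\zeta} = \scal{[S_\eta,S_\zeta]X,Y} = \scal{[\eta,[\zeta,X]]-[\zeta,[\eta,X]],Y} = \scal{[[\eta,\zeta],X],Y}
\]
by Jacobi, and $\Ad$-invariance rewrites the last expression as $\scal{[[X,Y],\eta],\zeta}$. Since $\zeta\in\h$ is arbitrary and $[[X,Y],\eta]\in[\h,\h]\subset\h$, this forces $R^\perp(X,Y)\eta=[[X,Y],\eta]$ at $p_0$; equivariance extends this to all points.

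The main obstacle is organizational rather than technical: one must invoke the symmetric-space facts about geodesics and parallel transport by transvections, and keep careful track of the ambient flat derivative $D$, its tangential part $D^T$, and the Levi-Civita connection $\nabla$---noting that the identification $D^T=\nabla$ is available only \emph{after} condition (b) has been verified via Proposition \ref{P:equivalentdefinition}. The remaining work is direct Lie-theoretic manipulation using $\Ad$-invariance and the bracket relations $[\m,\m]\subset\h$, $[\h,\m]\subset\m$, $[\h,\h]\subset\h$ that define the symmetric pair.
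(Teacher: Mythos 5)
Your proof is correct and reaches all four claims, but the middle step (condition~(b) and the formula for $\II$) is handled by a genuinely different device than the paper's. You extend $X,Y\in\m$ to vector fields by parallel transport along radial geodesics from $p_0$, and then invoke the classical symmetric-space fact that parallel transport along $\gamma_X(t)=\exp(tX)p_0$ is realized by the transvection differential $d(\exp(tX))$, so that $\tilde Y(\gamma_X(t))=\llbracket\exp(tX),Y\rrbracket$ and $D_X\tilde Y|_{p_0}=\tfrac{d}{dt}\big|_{t=0}\Ad_{\exp(tX)}Y=[X,Y]$. The paper instead extends by Killing fields: for $X\in\g$ it derives $X^*\llbracket g\rrbracket=\llbracket g,(\Ad_{g^{-1}}X)_\m\rrbracket$, computes $D_{X^*}\Omega_0(Y^*)$ at an arbitrary $g$, and only then specializes to $g=e$, $X,Y\in\m$, using $[\m,\m]\subset\h$ and that $[X^*,Y^*]_{p_0}=-[X,Y]^*_{p_0}=0$. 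Your route makes the core derivative a one-liner, at the cost of importing the transvection/parallel-transport theorem for symmetric spaces; the paper's route is purely Lie-theoretic and self-contained, needing nothing beyond the symmetric-pair bracket relations, at the cost of a slightly longer $\Ad$ computation. The treatment of condition~(a), the tangent/normal splitting, and the Ricci-equation derivation of $R^\perp$ (via $S_\eta X=[\eta,X]=-[X,\eta]$, Jacobi, and bi-invariance) are essentially identical to the paper's. One small presentational remark: you should make explicit, as you implicitly do, that the parallel-transport extensions give $\nabla\tilde X|_{p_0}=\nabla\tilde Y|_{p_0}=0$ and hence $[\tilde X,\tilde Y]_{p_0}=0$ \emph{before} invoking $D^T=\nabla$ from Proposition~\ref{P:equivalentdefinition} -- you flag this ordering issue yourself, and indeed your argument respects it.
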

\begin{proof}
It is clear from \eqref{E:naturalgenimm} that the tangent and normal spaces are $\Ad_g\m$ and $\Ad_g \h$. 

Let $X\in\g$. Under the identification of $TM$ with $G\times_H \m$ that we are using, the action field $X^*$ is given by 
\[
X^*\llbracket g \rrbracket= \llbracket g,(\Ad_{g^{-1}}X)_\m \rrbracket
.\]
Indeed, 
\[
 dg^{-1} \left(\left. \frac{d}{dt}\right|_{t=0} \llbracket e^{tX} g \rrbracket \right)
 =\left. \frac{d}{dt}\right|_{t=0}\llbracket g^{-1} e^{tX} g\rrbracket
 =d\pi_e(\Ad_{g^{-1}}X) 
=(\Ad_{g^{-1}}X)_\m.
\]

Given $X,Y\in\g$, we then have
\begin{align*}  
D_{X^*}\Omega_0(Y^*)&=\left.\frac{d}{dt}\right|_{t=0} \Ad_{e^{tX}g}((\Ad_{g^{-1}e^{-tX}}Y)_\m)\\
&=\Ad_g\big([\Ad_{g^{-1}}X,(\Ad_{g^{-1}}Y)_\m]  - (\Ad_{g^{-1}}[X,Y])_\m   \big)
\end{align*}

By $G$-equivariance, it is enough to show that,  for every $X,Y\in\m$, we have $d\Omega_0(X^*,Y^*)^T_{p_0}=0$ and $\II(X,Y)_{p_0}=[X,Y]$. Plugging $g=e$ in the equation above, and using the fact that $[\m,\m]=\h$, we have
\[ D_{X^*}\Omega_0(Y^*)= [X,Y]. \]
The tangent part of this is zero, so that
\[d\Omega_0(X^*,Y^*)^T_{p_0}= D_{X^*}\Omega_0(Y^*)^T_{p_0} - D_{Y^*}\Omega_0(X^*)^T_{p_0} - \Omega_0([X^*,Y^*])_{p_0}= 0-0-0=0\]
which means that $\Omega_0$ is a virtual immersion.

Moreover, 
 $\II(X,Y)_{p_0}=D_{X^*}\Omega_0(Y^*)^\perp_{p_0}= [X,Y]$.
 
Finally, we compute the curvature of the normal connection using Ricci's equation (see Proposition \ref{P:fundamental}(c)). From the equation for the second fundamental form above, we see that the shape operator is given by $S_\eta(X)=-[X,\eta]$. Therefore
\begin{align*}
\left< R^\perp(X,Y)\eta , \xi \right> = & \left<[S_\eta,S_\xi] X, Y \right> =
 \left<[[X,\xi],\eta]-[[X,\eta],\xi], Y \right> \\
 = &  \left<-[[\xi,\eta],X], Y \right>= \left<[Y,X], [\xi,\eta] \right> \\
 = &  \left<[[X,Y],\eta], \xi \right>
\end{align*}
where we have used the Jacobi identity in the third equal sign, and bi-invariance of $\left<,\right>$ in the last two equal signs.
\end{proof}

\begin{example}
Let $M=S^{n-1}$, the unit $(n-1)$-sphere. Its isometry group is $\OO(n)$, with Lie algebra $\so(n)$. The latter may be identified with $\wedge^2\R^n$ via the formula $x\wedge y\mapsto xy^t-yx^t$, where $x,y$ are viewed as column $n$-vectors. Take the base point $p_0$ to be the first standard basis vector $(1,0,\ldots,0)^t\in\R^n$. Then the virtual immersion $\Omega_0:TM\to \so(n)$ defined in \eqref{E:naturalgenimm} is simply given by $(p,v)\mapsto p\wedge v$. To prove this, one notes that the map given by this formula and $\Omega_0$ are both $\OO(n)$-equivariant, and that they coincide at the point $p_0\in M$.
\end{example}

\begin{remark}
Geometrically, we may think of the Lie algebra $\g$ as the space of Killing fields on $M$. Then, the map $\Omega_0$ defined in \eqref{E:naturalgenimm} sends the tangent vector $\llbracket g,X \rrbracket\in T_{\llbracket g\rrbracket}M$  to the unique Killing field with this value at $\llbracket g\rrbracket\in M$, and zero covariant derivative at this point. Equivalently, $\Omega_0(\llbracket g,X\rrbracket)$ is the Killing field of smallest
norm (as an element of $\g$) that has the value $\llbracket g,X\rrbracket$ at $\llbracket g\rrbracket$. Indeed, this follows from the formula $X^*\llbracket g\rrbracket= \llbracket g,(\Ad_{g^{-1}}X)_\m \rrbracket$.
\end{remark}

\begin{proof}[Proof of Theorem \ref{MT:indexbound}]
Let $(M,g)$ be a compact symmetric space, and consider $\Omega_0$ defined in Equation \eqref{E:naturalgenimm}. By Lemma  \ref{L:Omega0}, this is a virtual immersion with skew-symmetric second fundamental form. By the Gauss equation (Proposition \ref{P:fundamental}(b)), $R(X, Y, X, Y)=|\II(X,Y)|^2 $, so that in particular the ACS quantity defined in Equation \eqref{E:ACS} vanishes identically. Now the result follows from Proposition \ref{P:criterion}. 
\end{proof}

Next we proceed to the proof of Theorem \ref{MT:classification}. We need the following two lemmas.

\begin{lemma}
\label{L:locsym}
Let $(M,g)$ be a compact Riemannian manifold, and $\Omega$ a $V$-valued virtual immersion with skew-symmetric second fundamental form $\II$. Then:
\begin{enumerate}[a)]
\item $\left< R(X,Y)Z, W\right>=\left< \II(X,Y), \II(Z,W)\right>$.
\item $(D_X \II)(Y,Z)=-R(Y,Z)X$.
\item $\nabla R=0$. In particular, $(M,g)$ is a locally symmetric space.
\end{enumerate}
\end{lemma}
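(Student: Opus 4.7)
The plan is to obtain (a) from Gauss' equation together with the first Bianchi identity, then to use (a) plus a combinatorial consequence of Codazzi and skew-symmetry for (b), and finally to deduce (c) by covariantly differentiating the identity of (a).

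For (a), I would start from Gauss' equation $R(X,Y,Z,W) = \langle\II(Y,W),\II(X,Z)\rangle - \langle\II(X,W),\II(Y,Z)\rangle$ in Proposition \ref{P:fundamental}(b) and sum it over cyclic permutations of $X,Y,Z$. The left-hand side vanishes by the first Bianchi identity for the Levi-Civita $R$. On the right, applying skew-symmetry of $\II$ to the six terms produces cancellations and sign flips; what remains is $2\bigl[\langle\II(Y,W),\II(X,Z)\rangle - \langle\II(X,W),\II(Y,Z)\rangle - \langle\II(Z,W),\II(X,Y)\rangle\bigr] = 0$. Invoking Gauss once more on the first two terms identifies the bracket with $R(X,Y,Z,W) - \langle\II(X,Y),\II(Z,W)\rangle$, giving (a).

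For (b), the main step is to show that the normal Codazzi derivative of $\II$ vanishes. Set $A(X,Y,Z) := (D^\perp_X\II)(Y,Z)$. Codazzi (Proposition \ref{P:fundamental}(d)) gives $A(X,Y,Z) = A(Y,X,Z)$, while skew-symmetry of $\II$ gives $A(X,Y,Z) = -A(X,Z,Y)$. Iterating these two involutions around the six-step cycle $(X,Y,Z)\to(Y,X,Z)\to(Y,Z,X)\to(Z,Y,X)\to(Z,X,Y)\to(X,Z,Y)\to(X,Y,Z)$ accumulates three sign flips and three sign-preservations, forcing $A \equiv 0$. Now the symbol $(D_X\II)(Y,Z)$ in the statement of (b) should be read as the $V$-valued tensor derivative using the ambient flat connection on $M\times V$: by Leibniz it decomposes into a tangent part $-S_{\II(Y,Z)}X$ and a normal part $(D^\perp_X\II)(Y,Z)$, the latter being zero by what we just proved. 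Weingarten's equation together with part (a) identifies $\langle S_{\II(Y,Z)}X, W\rangle = \langle\II(X,W),\II(Y,Z)\rangle = \langle R(Y,Z)X, W\rangle$, yielding $(D_X\II)(Y,Z) = -R(Y,Z)X$.

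For (c), it suffices to show $\nabla R = 0$. Covariantly differentiating the identity in (a) with $\nabla_V$, using $\nabla$ on tangent slots and $D^\perp$ on the $\nu M$-valued $\II$, gives $(\nabla_V R)(X,Y,Z,W) = \langle(D^\perp_V\II)(X,Y),\II(Z,W)\rangle + \langle\II(X,Y),(D^\perp_V\II)(Z,W)\rangle$, which vanishes since $D^\perp\II \equiv 0$ from the preceding paragraph. Hence $(M,g)$ is locally symmetric. The main obstacle is the combinatorial vanishing $D^\perp\II \equiv 0$ in (b) — one must carefully distinguish between the $V$-valued derivative $D\II$ and the normal bundle derivative $D^\perp\II$ when parsing the statement — and the surprising fact that Codazzi together with skew-symmetry alone forces the normal derivative to vanish is precisely the rigidity mechanism underlying Theorem \ref{MT:classification}.
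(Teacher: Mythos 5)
Your proofs of (a) and (b) coincide in substance with the paper's: the cyclic Bianchi argument for (a) is the same computation presented slightly differently, and in (b) your combinatorial vanishing of $A=D^\perp\II$ (three sign flips around the six-cycle) together with the Weingarten/Gauss identification of the tangent part $-S_{\II(Y,Z)}X$ rephrases the paper's observation that Codazzi plus skew-symmetry forces $(D_X\II)(Y,Z)$ to be tangent, after which $\langle D_X(\II(Y,Z)),W\rangle=-\langle\II(Y,Z),\II(X,W)\rangle$. For (c), however, you take a genuinely different and shorter route: you differentiate the identity of (a), using metric-compatibility of $D^\perp$ and the vanishing $D^\perp\II\equiv 0$ established in (b), and conclude $\nabla R=0$ in a single step. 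The paper instead applies the flatness of $D$ on $M\times V$ to $\II(Z,W)$, computes with (b) to arrive at the symmetry $(\nabla_X R)(Z,W)Y=(\nabla_Y R)(Z,W)X$, and then runs a second combinatorial argument (a $5$-tensor symmetric in slots $3,5$ and skew in slots $3,4$ must vanish) to conclude $\nabla R=0$. Your route is more economical because it directly reuses $D^\perp\II=0$ from (b); both are correct.
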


\begin{proof}
\begin{enumerate}[a)]
\item Start with Gauss' equation (see Proposition \ref{P:fundamental}(b)),
\[R(X,Y,Z,W)=\left<\II(Y,W),\II(X,Z)\right>-\left<\II(X,W),\II(Y,Z)\right>\]
Applying the first Bianchi identity yields
\[ 0=-2\big(\left<\II(X,Y),\II(Z,W)\right> + \left<\II(Y,Z),\II(X,W)\right>
+ \left<\II(Z,X),\II(Y,W)\right> \big)\]
so that using Gauss' equation one more time we arrive at \[\left< R(X,Y)Z, W\right>=\left< \II(X,Y), \II(Z,W)\right>.\]
\item First we argue that $(D_X \II)(Y,Z)$ is tangent. Indeed, for any normal vector $\eta$, Codazzi's equation (Proposition \ref{P:fundamental}(d)) says that 
\[ \left<(D_X \II) (Y,Z) ,\eta \right>= \left<(D_Y \II) (X,Z) ,\eta \right>. \]
Thus the trilinear map $(X,Y,Z)\mapsto  \left<(D_X \II) (Y,Z) ,\eta \right>$ is symmetric in the first two entries and skew-symmetric in the last two entries, which forces it to vanish.

Next we let $W$ be any tangent vector and compute
\begin{align*}
 \left<(D_X \II) (Y,Z) ,W \right> &=  \left<D_X ( \II(Y,Z)) ,W \right> =  -\left< \II (Y,Z) ,D_XW \right>\\
 &= -\left< \II (Y,Z) ,\II(X,W) \right> = -\left< R(Y,Z)X ,W \right>
\end{align*}
where in the last equality follows we have used part (a).

\item Since the natural connection $D$ on $M\times V$ is flat, it follows that for any vector fields $X,Y,Z,W$, we have
\[ 0= D_X(D_Y(\II(Z,W))) -D_Y(D_X(\II(Z,W))) -D_{[X,Y]}(\II(Z,W)). \]
Fix $p\in M$, and take vector fields such that $[X,Y]=0$ and $\nabla Z=\nabla W=0$ at $p\in M$. Then, evaluating the equation above at $p\in M$, we have
\begin{align*}
0  = & D_X \big( (D_Y\II) (Z,W) +\II(\nabla_Y Z, W) +\II(Z, \nabla_Y W)\big)\\
& - D_Y \big( (D_X\II) (Z,W) +\II(\nabla_X Z, W) +\II(Z, \nabla_X W)\big)  \\
= & D_X (-R(Z,W)Y) + \II(\nabla_X\nabla_Y Z, W) + \II(Z,\nabla_X\nabla_Y W) \\
& -D_Y (-R(Z,W)X) - \II(\nabla_Y\nabla_X Z, W) - \II(Z,\nabla_Y\nabla_X W) \\
= & -(D_X R)(Z,W)Y + (D_Y R)(Z,W)X -\II(R(X,Y)Z,W) - \II(Z, R(X,Y)W)
\end{align*}
Taking the tangent part yields $(\nabla_X R)(Z,W)Y = (\nabla_Y R)(Z,W)X$. Taking inner product with $T\in T_pM$ we have
\[(\nabla R) (Z,W,Y,T,X)=(\nabla R) (Z,W,X,T,Y),\]
that is, $\nabla R$ is symmetric in the third and fifth entries.
But $\nabla R$ is also skew-symmetric in the third and fourth entries, so that $\nabla R=0$.
\end{enumerate}
\end{proof}

\begin{lemma}
\label{L:unique}
Let $(M,g)$ be a connected Riemannian manifold, and let $\Omega_j:TM\to V_j$, for $j=1,2$ be virtual immersions with skew-symmetric second fundamental forms $\II_j$. Assume $V_1, V_2$ are minimal in the sense that $V_j=\operatorname{span}(\Omega_j (TM)) $. Then $\Omega_1, \Omega_2$ are equivalent in the sense that there is a linear isometry $L:V_1\to V_2$ such that $\Omega_2=L\circ\Omega_1$.
\end{lemma}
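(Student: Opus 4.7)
The plan is to build $L$ pointwise at each $p \in M$ and then show the construction is independent of $p$. The whole argument rests on a preliminary claim: under minimality, $\nu^j_p M = \operatorname{Im}(\II_j)_p := \operatorname{span}\{\II_j(X,Y) : X,Y \in T_pM\}$ for both $j$ and all $p$. By Lemma \ref{L:locsym}(b), $(D_X \II_j)(Y,Z) = -R(Y,Z)X$ is tangent, so for parallel vector fields $Y, Z$ along a curve $\gamma$ we have $D^\perp_{\dot\gamma}(\II_j(Y,Z)) = 0$; thus $\operatorname{Im}(\II_j)$ is a parallel sub-bundle of $\nu^j M$. Any constant $v \in V_j$ decomposes as $v = v_T + v_N$ with ODE $D^\perp_{\dot\gamma} v_N = -\II_j(\dot\gamma, v_T) \in \operatorname{Im}(\II_j)$. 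Writing $v_N$ in a parallel frame of $\nu^j M$ whose first elements span $\operatorname{Im}(\II_j)$, the components of $v_N$ orthogonal to $\operatorname{Im}(\II_j)$ are constant along $\gamma$; when $v = \Omega_j(X_q)$ they vanish at $q$ and hence everywhere. Minimality then upgrades this to $\nu^j_p M = \operatorname{Im}(\II_j)_p$.

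With this in hand, Lemma \ref{L:locsym}(a) makes $\langle \II_j(X,Y), \II_j(Z,W)\rangle = \langle R(X,Y)Z, W\rangle$ independent of $j$, so the assignment $\II_1(X,Y) \mapsto \II_2(X,Y)$ extends to a well-defined linear isometry $\phi_p: \nu^1_p M \to \nu^2_p M$. Define $L_p: V_1 \to V_2$ to act as the identity on $T_pM$ (under the identifications by $\Omega_1$ and $\Omega_2$) direct-summed with $\phi_p$ on $\nu^1_pM$; this is a linear isometry.

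To show $L_p(v)$ is constant in $p$ for every $v \in V_1$, work along a curve $\gamma$ using a $D^\perp$-parallel frame of $\nu^1 M$ built out of $\II_1$-images of parallel vector fields in $TM$, whose $\phi$-image is automatically a parallel frame of $\nu^2 M$. Computing $\tfrac{d}{dt}L_\gamma(v) = D_{\dot\gamma}\Omega_2(v_T) + D_{\dot\gamma}\phi_\gamma(v_N)$ in $V_2$, the $\II_2(\dot\gamma, v_T)$ contributions cancel and the remainder collapses to $\Omega_2\big(S^{(1)}_{v_N}(\dot\gamma) - S^{(2)}_{\phi(v_N)}(\dot\gamma)\big)$. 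Since $\phi$ is an isometry intertwining $\II_1$ and $\II_2$, Weingarten's equation (Proposition \ref{P:fundamental}(a)) forces $S^{(1)}_\eta = S^{(2)}_{\phi(\eta)}$, so the derivative vanishes. Hence $L := L_{p_0}$ is globally well-defined, and taking $v = \Omega_1(X_p)$ (which has $v_T = X_p$, $v_N = 0$ at $p$) yields $L \circ \Omega_1 = \Omega_2$.

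The main obstacle is the preliminary claim $\nu^j_p M = \operatorname{Im}(\II_j)_p$: one has to leverage the parallelism of $\II_j$ supplied by Lemma \ref{L:locsym}(b) to trap the normal part of every constant section inside $\operatorname{Im}(\II_j)$, and then invoke minimality to reach arbitrary normal vectors. Once this is done, the independence of $L_p$ on $p$ reduces to a short computation whose cancellations are forced by Weingarten together with the isometric property of $\phi$.
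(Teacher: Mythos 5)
Your proof is correct, and it uses the same underlying ingredients as the paper's (Lemma~\ref{L:locsym}(a) to get well-definedness and the isometric property of $\phi_p$, Lemma~\ref{L:locsym}(b) to get parallelism, and minimality to get surjectivity), but packages them differently. The paper introduces the auxiliary bundle $TM\oplus\wedge^2 TM$ with the connection $\hat D$ and the bundle maps $\hat\Omega_j$, then defines $L$ by the relation $L\circ\hat\Omega_1=\hat\Omega_2$ and checks independence of the basepoint by transporting parallel sections; the argument that $\hat\Omega_j$ is onto via minimality is the same fact you establish more concretely as $\nu^j_p M=\operatorname{Im}(\II_j)_p$. You instead split $V_j = T_pM\oplus \nu^j_pM$ directly, construct $\phi_p$ on the normal bundle, set $L_p=\id\oplus\phi_p$, and verify $\tfrac{d}{dt}L_\gamma(v)=0$ by an explicit computation, with the extra observation that the intertwining property of $\phi$ under Weingarten forces $S^{(1)}_\eta=S^{(2)}_{\phi(\eta)}$. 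The paper's formulation is slicker in that it avoids separating the tangent and normal contributions and never needs to write down the shape-operator identity explicitly; yours is more hands-on and makes visible exactly which cancellations (the $\II_2(\dot\gamma,v_T)$ terms, the shape-operator terms) are responsible for $L_p$ being constant, which some readers may find more illuminating. One small point worth stating explicitly in your preliminary claim: to pass from ``the $W$-component of every constant $v=\Omega_j(X_q)$ vanishes along every curve'' to ``$W=0$'', you should note that by minimality such $v$ span all of $V_j$, and at any fixed $p$ the evaluation $v\mapsto v(p)\in V_j$ is the identity, so $W_p$ receives no contribution from a spanning set and must vanish.
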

\begin{proof}
Define a connection $\hat{D}$ on the vector bundle $TM\oplus \wedge^2 TM$ by
\[ \hat{D}_W \Big(Z, \,\, \sum_i X_i\wedge Y_i\Big) =
\Big(\nabla_W Z -\sum_i R(X_i, Y_i)W,\,\,\, W\wedge Z+\nabla_W\sum_i X_i\wedge Y_i\Big)  \]
Define bundle homomorphisms $\hat\Omega_j :TM \oplus \wedge^2 TM \to M\times V_j$, for $j=1,2$, by
\[ \hat\Omega_j\Big(Z,\,\, \sum_i X_i\wedge Y_i\Big) = \Big(p, \,\,\, \Omega_j(Z)+\sum_i \II_j(X_i, Y_i)\Big) \]
for $Z,X_i,Y_i\in T_pM $. By Lemma \ref{L:locsym}(b), given vector fields $X_i, Y_i, Z, W$, we have
\begin{equation}
\label{E:connections}
(D_j)_W \Big( \hat\Omega_i\Big(Z,\quad \sum_i X_i\wedge Y_i\Big) \Big)= 
\hat\Omega_j \Big(\hat{D}_W \Big(Z,\quad \sum_i X_i\wedge Y_i\Big) \Big)
\end{equation}
where $D_j$ denotes the natural flat connection on $M\times V_j$. This implies that the image of $\hat\Omega_j$ is $D_j$-parallel, and hence, by minimality of $V_j$, that $\hat\Omega_j$ is onto $M\times V_j$.

Define a bundle isomorphism $L:M\times V_1\to M\times V_2$ by
\[ L \Big(\hat\Omega_1\Big(Z,\quad \sum_i X_i\wedge Y_i\Big)\Big) = 
\hat\Omega_2\Big(Z,\quad \sum_i X_i\wedge Y_i\Big)\]
for $Z,X_i,Y_i\in T_pM $. This is well-defined because, by Lemma \ref{L:locsym}(a), $\ker\hat\Omega_1=\ker\hat\Omega_2$. Indeed, they are both equal to 
\[ \Big\{ \Big. \Big(0,\quad \sum_i X_i\wedge Y_i\Big) \ \Big|\ X_i, Y_i\in T_pM,\ \sum_{a,b}R(X_a, Y_a, X_b, Y_b)=0\Big\} \]

We claim the linear map $L_p: V_1\to V_2$ is independent of $p\in M$. Indeed, given two points $p,q\in M$, choose a curve $\gamma(t)$ in $M$ joining $p$ to $q$. Choose smooth vector fields $Z,X_i,Y_i$ along $\gamma(t)$ such that $\hat\Omega_1(Z,\sum X_i\wedge Y_i)$ is constant equal to $v\in V_1$. Then, by \eqref{E:connections}, $\hat D_{\dot\gamma} (Z,\sum X_i\wedge Y_i)\In \ker\hat\Omega_1 $. But by Lemma \ref{L:locsym}(a), $\ker\hat\Omega_1=\ker\hat\Omega_2$. Therefore, again by \eqref{E:connections}, we see that $L(v)$ is constant along $\gamma$, so that $L_p=L_q$.
Calling this one linear map $L$, we have $\hat\Omega_2=L\circ\hat\Omega_1$ by construction. In particular, $\Omega_2=L\circ\Omega_1$, finishing the proof that $\Omega_1$ and $\Omega_2$ are equivalent.
\end{proof}

\begin{proof}[Proof of Theorem \ref{MT:classification}]
Let $(M,g)$ be a compact Riemannian manifold. If $M$ is symmetric, then it admits a virtual immersion with skew-symmetric second fundamental form by Lemma \ref{L:Omega0}. Conversely, let $\Omega:TM\to V$ be a  virtual immersion with skew-symmetric second fundamental form. We may assume that $V$ is minimal, and then uniqueness of $\Omega$ follows from Lemma \ref{L:unique}. What remains to be proved is that $M$ is symmetric.

By Lemma \ref{L:locsym}(c), $M$ is locally symmetric, and therefore its universal cover $\tilde{M}$ is symmetric. By Lemma \ref{L:locsym}(a), $\tilde{M}$ has non-negative curvature,  so that $\tilde{M}$ splits isometrically as $\tilde{M}= N\times \R^l$, where $N$ is a compact, simply-connected symmetric space.

Denoting by $G$ the isometry group of $N$, we claim that $\isom(\tilde{M})=G\times \isom(\R^l)$. Indeed, tangent vectors of the form $(v,0)$ are characterized by the fact that the associated geodesic has bounded image. Thus, any isometry $\gamma$ of $N\times \R^l$ preserves the splitting $T(N\times \R^l)=TN\oplus \R^l$. In particular, fixing $p\in N$, the maps $g:N\to N$ and $B:\R^l\to \R^l$ given by composing the obvious maps
\begin{align*}
g:N\to N\times\{0\}\hookrightarrow N\times \R^l \stackrel{\gamma}{\to} N\times \R^l \to N \\
B:\R^l\to \{p\}\times\R^l\hookrightarrow N\times \R^l \stackrel{\gamma}{\to} N\times \R^l \to \R^l 
\end{align*}
are isometries. Since $\gamma$ and $g\times B$ are isometries of $N\times \R^l$ whose values and first derivatives coincide at $(p,0)$, it follows that $\gamma=g\times B$.

Denote by $\Gamma\subset \isom(\tilde{M})=G\times \isom(\R^l)$ the group of deck transformations for the covering $\rho:\tilde{M}\to M$ (so that $\Gamma$ is isomorphic to $\pi_1(M)$).
Let $(p,\xi)\in N\times \R^l$, and consider the symmetry $s=s_p\times s_\xi$ at $(p,\xi)$. We need to show that $s$ normalizes $\Gamma$, so that $s$ descends to a well-defined symmetry of $M$. We will in fact show that $s\gamma s=\gamma^{-1}$ for every $\gamma=g\times B\in \Gamma$.

Note that $\Omega_0\times\id:T\tilde{M}\to \g\times\R^l$ is a virtual immersion with skew-symmetric second fundamental form, where $\Omega_0$ is defined as in \eqref{E:naturalgenimm}. Since the pull-back $\rho^*\Omega$ is again such a virtual immersion,
 Lemma \ref{L:unique} implies that $\Omega_0\times\id$ is fixed by $\Gamma$. Therefore, $B$ must be a translation. As for $g\in G$, we have $\Ad_g X=X$ for every $X\in \m$, and, since $[\m,\m]=\h$, the same equation holds for every $X\in\g$. In particular, $g$ commutes with the identity component $G_0$ of $G$. 

Since $G_0$ acts transitively on $N$, this implies that the displacement function $q\in N \mapsto d(q, g (q))$ has a constant value $d$ (in other words, it is a Clifford-Wolf translation). Moreover, the isometries $s_p g s_p$ and $g s_p g s_p$ also commute with $G_0$, so that they are Clifford-Wolf translations as well.  Thus it suffices to show that $g s_p g s_p$ has a fixed point, for this would imply that $g s_p g s_p=\id$, and hence that $\gamma s \gamma s= \id$.

Let $c(t):[0,1]\to N$ be a minimal geodesic between $c(0)= p$ and $c(1)=g(p)$, and let $m=c(1/2)$ denote the midpoint. Then, the concatenation of $c$ with $g^{-1}c$ must be a geodesic, because $d(g^{-1}m,m)=d=d(g^{-1}m,p)+d(p,m)$. Thus, extending the geodesic segment $c$ to a complete geodesic $c:\R\to N$, we see that $c(-1)=g^{-1}(p)$.
In particular, $s_p g s_p (p)= s_p g (p)= s_p (c(1))= c(-1) = g^{-1}(p)$, and hence $g s_p g s_p$ fixes the point $p\in N$, finishing the proof.
\end{proof}

\section{Affine bounds on the index}

Let $\Sigma\to M=G/H$ be a compact, immersed, minimal hypersurface in a compact symmetric space. This section addresses the question of when the linear bound in $b_1(\Sigma)$
 on the extended index of $\Sigma$ given in Theorem \ref{MT:indexbound} can be ``improved'' to an affine bound on the index.  
To find such affine bounds, we consider the unique virtual immersion $\Omega:TM\to \mathfrak{g}$ with skew-symmetric second fundamental form, defined in \eqref{E:naturalgenimm}. In view of Proposition \ref{P:affinebound}, it suffices to find an upper bound on the dimension of the space of harmonic 1-forms $\omega$ on $\Sigma$ such that $X_{ij}(\omega)$ lies in the kernel of the Jacobi operator $J_\Sigma$ for all $i,j$, where $X_{ij}$ is defined in \eqref{E:variation}.

To compute $J_\Sigma(X_{ij}(\omega))$ at $p\in \Sigma$, choose an orthonormal frame $E_1,\ldots E_{n-1}$ of $\Sigma$, such that $(\nabla^{\Sigma}_{E_i}E_j)_p=0$, and an orthonormal basis $\theta_i$ of $\mathfrak{g}$. Then
\begin{align*}
J_\Sigma(X_{ij}(\omega))=&\nabla^{\perp}_{E_k}\nabla^{\perp}_{E_k}\big(\scal{N\wedge \omega^\#,\theta_i\wedge \theta_j}N\big)\\
&+ (|A|^2+Ric_M(N,N))\scal{N\wedge \omega^\#,\theta_i\wedge \theta_j}N \\
=& \big<D_{E_k}D_{E_k}(N\wedge \omega^\#)+(|A|^2+Ric_M(N,N))(N\wedge \omega^\#),\theta_i\wedge \theta_j\big> N.
\end{align*}
Here and in the rest of the section we adopt the convention that, when repeated indices appear, we are  summing over them. From the previous equation $J_\Sigma(X_{ij}(\omega))=0$ for all $i,j$ if and only if
\begin{equation}\label{E:nullity}
D_{E_k}D_{E_k}(N\wedge \omega^\#)+(|A|^2+Ric_M(N,N))(N\wedge \omega^\#)=0.
\end{equation}

We proceed now to compute Equation \eqref{E:nullity}. For this, let $\nabla$ denote the Levi Civita connection of $\Sigma$. We have:
\begin{align}
\label{E:the-eqn} D_{E_k}D_{E_k}(N\wedge {\omega^\#})= D_{E_k}&\Big(-S_NE_k\wedge {\omega^\#}+ \II(E_k,N)\wedge {\omega^\#}\\& +N\wedge \nabla_{E_k}{\omega^\#}+N\wedge \II(E_k,{\omega^\#})\Big)\nonumber
\end{align}
We compute the derivatives of each of the four summands on the right hand side of the equation above, in (a)--(d) below. For the sake of clarity, when computing terms of the type $D_{E_k}(X\wedge Y)$, we display the result in the form $((D_{E_k}X)\wedge Y)+(X\wedge (D_{E_k}Y))$, that is, we write the two parts separately inside parentheses.
\begin{align}
\label{a}\tag{a} D_{E_k}(-S_NE_k\wedge {\omega^\#})=& \Big(-\nabla_{E_k}(S_NE_k)\wedge {\omega^\#} -|S_N|^2 N\wedge {\omega^\#}\\
&-\II(E_k,S_N{E_k})\wedge {\omega^\#}\Big)+\Big(-S_NE_k\wedge \nabla_{E_k}{\omega^\#}\nonumber \\
& - S_NE_k\wedge\left<S_N{\omega^\#}, E_k\right>N-S_NE_k\wedge \II(E_k,{\omega^\#})\Big) \nonumber
\end{align}
\begin{align}
\label{b}\tag{b} D_{E_k}(\II(E_k,N)\wedge {\omega^\#})=&\Big(-R(E_k,N)E_k\wedge {\omega^\#} -\II(E_k,S_NE_k)\wedge {\omega^\#}\Big)\\
&\!+\!\Big(\II(E_k,N)\!\wedge\! \nabla_{E_k}{\omega^\#}+\II(E_k,N)\!\wedge\! \left<S_NE_k,{\omega^\#}\right>\!N\nonumber \\
&+\II(E_k,N)\wedge\II(E_k,{\omega^\#})\Big)\nonumber
\end{align}
In the equation above, it was used the fact that $(D_{Z}\II)(X,Y)=-R(X,Y)Z$, and that by assumption $\nabla_{E_k}E_j=0$ at $p$.
\begin{align}
\label{c}\tag{c} D_{E_k}(N\wedge \nabla_{E_k}{\omega^\#})=&\Big(-S_NE_k\wedge \nabla_{E_k}{\omega^\#}+\II(E_k,N)\wedge  \nabla_{E_k}{\omega^\#}\Big)\\
&+\Big(N\wedge \nabla_{E_k}\nabla_{E_k}{\omega^\#}+  N\wedge \II(E_k,\nabla_{E_k}{\omega^\#})\Big)\nonumber
\end{align}

\begin{align}
\label{d}\tag{d} D_{E_k}(N\wedge \II(E_k,{\omega^\#}))=&\Big(-S_NE_k\wedge \II(E_k,{\omega^\#})+ \II(E_k,N)\wedge \II(E_k,{\omega^\#})\Big)\\
&+\Big(-N\wedge R(E_k,{\omega^\#})E_k+N\wedge\II(E_k,\nabla_{E_k}{\omega^\#})\nonumber\\
&+ N\wedge \II(E_k, \left<S_N{\omega^\#}, E_k\right>N)\Big)\nonumber
\end{align}

\begin{lemma}
\label{L:rigidity}
Let $M=G/H$ be a compact symmetric space and let $\Sigma\to M$ a closed, immersed, minimal hypersurface. Suppose $\omega$ is a harmonic one-form on $\Sigma$ such that $J_\Sigma(X_{ij}(\omega))=0$ for all $1\leq i<j\leq d$. Then
\begin{enumerate}[a)]
\item The operators $\nabla\omega^\#$ and $S_N$ commute.
\item At any point $p=\llbracket g\rrbracket\in G/H$, $Ad_g^{-1}\II(\omega^\#,N)$ is contained in the center of $\mathfrak{h}$, $\mathfrak{z}(\mathfrak{h})=\{x\in \mathfrak{h}\mid [x,y]=0\,\forall y\in \mathfrak{h}\}$.
\item For any vector $x$ tangent to $\Sigma$, $\nabla_x\left(\II(\omega^\#,N)\right)=-R(\omega^\#,N)x$. In particular, $\|\II(\omega^\#,N)\|$ is constant.
\end{enumerate}
\end{lemma}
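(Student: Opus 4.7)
The strategy is to unpack equation \eqref{E:nullity} --- whose full expansion is the sum of \eqref{E:the-eqn}(a)--(d) together with the zeroth-order term $(|A|^2+\Ric_M(N,N))N\wedge\omega^\#$ --- and then decompose the resulting vanishing identity in $\wedge^2 V$ along the pointwise orthogonal splitting $V = T_pM\oplus\nu_pM$. This induces
\[
\wedge^2 V \;=\; \wedge^2 T_pM \,\oplus\, (T_pM\otimes\nu_pM) \,\oplus\, \wedge^2\nu_pM,
\]
and each projection of \eqref{E:nullity} must vanish separately. Parts (b), (c), and (a) of the lemma then follow from analyzing these three projections in turn.

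The $\wedge^2\nu_pM$-projection receives contributions only from (b) and (d), giving $2\sum_k\II(E_k,N)\wedge\II(E_k,\omega^\#) = 0$. Adjoining $N$ to $\{E_k\}$ to form an orthonormal basis of $T_pM$ (which leaves the sum unchanged, since $\II(N,N)=0$) and using Weingarten together with the skew-symmetry $S_\eta^t=-S_\eta$ (a consequence of the skew-symmetry of $\II$), this reduces to $\langle [S_\xi,S_\zeta]\omega^\#,N\rangle = 0$ for all $\xi,\zeta\in\nu_pM$. By Ricci's equation (Proposition \ref{P:fundamental}(c)), this says $R^\perp(\omega^\#,N)=0$; and since $R^\perp(X,Y)\eta = [\II(X,Y),\eta]$ (Lemma \ref{L:Omega0} together with $G$-equivariance), $\II(\omega^\#,N)$ commutes with every element of $\nu_pM = \Ad_g\h$, i.e., $\Ad_g^{-1}\II(\omega^\#,N)\in\mathfrak{z}(\h)$, proving (b).

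For (c), the $T_pM\otimes\nu_pM$-projection is analyzed next. The two terms $\langle S_NE_k,\omega^\#\rangle\II(E_k,N)\wedge N$ (from (b)) and $\langle S_N\omega^\#,E_k\rangle N\wedge\II(E_k,N)$ (from (d)) cancel by symmetry of $S_N$. Pairing the four remaining terms against arbitrary $x\in T_p\Sigma$ and $\eta\in\nu_pM$, and using Weingarten, harmonicity of $\omega$ (to turn $\langle\nabla_{E_k}\omega^\#,x\rangle$ into $\langle E_k,\nabla_x^\Sigma\omega^\#\rangle$), and the vanishing $\operatorname{tr}_{T_p\Sigma}(S_NS_\eta) = 0$ --- which holds because $S_N$ is symmetric, $S_\eta$ is skew, and $\langle S_NS_\eta N,N\rangle=0$ --- one arrives at
\[
\langle\nabla_x^\Sigma\omega^\#, S_\eta N\rangle + \langle S_Nx, S_\eta\omega^\#\rangle = 0,
\]
which by Weingarten is equivalent to $\II(\nabla_x^\Sigma\omega^\#,N) = \II(\omega^\#,S_Nx)$ for every $x\in T_p\Sigma$. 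On the other hand, by Lemma \ref{L:locsym}(b) and its proof, the ambient derivative of $\II(\omega^\#,N)$ decomposes as
\[
D_x\II(\omega^\#,N) \;=\; -R(\omega^\#,N)x \;+\; \bigl(\II(\nabla_x^\Sigma\omega^\#,N) - \II(\omega^\#,S_Nx)\bigr),
\]
where the second summand (normal to $M$) is computed from $\nabla_x^MN=-S_Nx$, $\nabla_x^M\omega^\# = \nabla_x^\Sigma\omega^\# + \langle S_Nx,\omega^\#\rangle N$, and $\II(N,N)=0$. The vanishing just established shows this normal correction is zero, giving (c). Constancy of $\|\II(\omega^\#,N)\|$ is then immediate, since $D_x\II(\omega^\#,N) = -R(\omega^\#,N)x$ is tangent and therefore orthogonal to the normal vector $\II(\omega^\#,N)$.

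Finally, (a) arises from the $\wedge^2 T_pM$-projection, which splits further as $\wedge^2 T_p\Sigma\oplus(T_p\Sigma\wedge N)$; its $\wedge^2 T_p\Sigma$-part naturally encodes a skew-adjoint operator on $T_p\Sigma$. Using harmonicity of $\omega$ (so $\nabla^\Sigma\omega^\#$ is symmetric and traceless), the identification $|A|^2 = |S_N|^2$ for hypersurfaces, and the Gauss equation for $\Sigma\subset M$ to combine the contributions $R^M(E_k,N)E_k\wedge\omega^\#$, $N\wedge R^M(E_k,\omega^\#)E_k$, and $\Ric_M(N,N)\,N\wedge\omega^\#$, this component collapses precisely to $[\nabla^\Sigma\omega^\#,S_N] = 0$, giving (a). The main technical burden lies in this last step, where many terms of the $\wedge^2 T_pM$-projection must be carefully assembled, and the harmonicity of $\omega$ --- rather than just its closedness or co-closedness --- is essential to force the clean commutator identity.
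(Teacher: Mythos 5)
Your overall strategy matches the paper's exactly: expand equation \eqref{E:nullity} via \eqref{E:the-eqn}(a)--(d) together with the zeroth-order term, and project the resulting identity in $\wedge^2V$ onto the orthogonal components induced by $V=\R\cdot N\oplus T_p\Sigma\oplus\nu_pM$. Your treatments of (b) (the $\wedge^2\nu_pM$ component, reduced via Weingarten and Ricci to $R^\perp(\omega^\#,N)=0$) and of (c) (the $T_p\Sigma\otimes\nu_pM$ component, including the observation that the $N\wedge\nu_pM$ part cancels by symmetry of $S_N$) agree with the paper in both method and substance.

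However, your explanation of part (a) is confused. The terms you cite as feeding the $\wedge^2T_p\Sigma$-projection --- namely $\Ric_M(N,N)\,N\wedge\omega^\#$, the $|A|^2$ zeroth-order term, and $N\wedge R^M(E_k,\omega^\#)E_k$ --- all lie in the $N\wedge T_p\Sigma$ component, not in $\wedge^2T_p\Sigma$. Projecting onto $\wedge^2T_p\Sigma$ yields only
\[
0=-\nabla_{E_k}(S_NE_k)\wedge\omega^\#-2\,S_NE_k\wedge\nabla_{E_k}\omega^\#-\pi_\Sigma\!\big(R(E_k,N)E_k\big)\wedge\omega^\#,
\]
and the key simplification is the \emph{Codazzi} equation for $\Sigma\subset M$ combined with minimality, which gives $\nabla_{E_k}(S_NE_k)=-\pi_\Sigma(R(E_k,N)E_k)$, whence the commutator $[S_N,\nabla\omega^\#]=0$ drops out. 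You invoke the Gauss equation and $|A|^2=|S_N|^2$ instead, which belong to the $N\wedge T_p\Sigma$ component (that projection collapses to the Bochner--Weitzenb\"ock identity for harmonic one-forms and carries no new information). So your stated conclusion for (a) is correct and the route is the right one, but the intermediate bookkeeping --- which terms land in which component, and which fundamental equation is actually needed --- is misidentified; be sure to separate the $\wedge^2T_p\Sigma$ and $N\wedge T_p\Sigma$ pieces and use Codazzi rather than Gauss there.
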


For example, when the symmetric space $M$ is a torus, parts (b) and (c) are trivially satisfied, and part (a) is equivalent to Proposition 3 in \cite{ACS17}.

\begin{remark}
In fact, $J_\Sigma(X_{ij}(\omega))=0$ for all $1\leq i<j\leq d$ if and only if conditions (a), (b), (c) are satisfied. We omit the proof of the reverse implication since it is not used in the remainder of this article.
\end{remark}

\begin{proof} Notice that Equation \eqref{E:nullity} is a vector-valued equation in $\wedge^2 V$. Fixing a point $p$ in $\Sigma$ and identifying $T_pM$ with its image under $\Omega_p$ in $V$, we can split orthogonally $V=T_pM\oplus \nu_pM=\R\cdot N\oplus T_p\Sigma\oplus \nu_p M$
and this induces a splitting of $\wedge^2V$. The different parts of the lemma follow from projecting Equation \eqref{E:nullity} on the different subspaces.
\\

a) Projecting Equation \eqref{E:nullity} onto the subspace $\wedge^2T_p\Sigma\subset \wedge^2 V$ and using the computations above, we obtain
\begin{align} \label{E:eqn a}
0=&-\nabla_{E_k}(S_NE_k)\wedge {\omega^\#} -2S_NE_k\wedge \nabla_{E_k}{\omega^\#} -\pi_\Sigma(R(E_k,N)E_k)\wedge {\omega^\#}.
\end{align}
Here $\pi_\Sigma$ denotes orthogonal projection onto $T_p\Sigma$. Applying Codazzi equation to the first term we get
\begin{align*}
\nabla_{E_k}(S_NE_k)=&\left<\nabla_{E_k}(S_NE_j),E_k\right>E_j\\
=&\left(\left<\nabla_{E_j}(S_NE_k),E_k\right> + \left<R(E_j,E_k)E_k,N\right>\right)E_j\\
=&\left(E_j\left<S_NE_k,E_k\right>-\left<R(E_k, N)E_k,E_j\right>\right)E_j\\
=&-\pi_\Sigma(R(E_k, N)E_k)
\end{align*}
The last equation holds because, since $\Sigma$ is minimal, the first summand in the second equation vanishes. Equation \eqref{E:eqn a} thus becomes
\begin{align*}
0=&\pi_\Sigma(R(E_k,N)E_k)\wedge {\omega^\#} -2S_NE_k\wedge \nabla_{E_k}{\omega^\#} -\pi_\Sigma(R(E_k,N)E_k)\wedge {\omega^\#}\\
\Rightarrow 0=&S_NE_k\wedge \nabla_{E_k}{\omega^\#}
\end{align*}
For any $x,y\in T_p\Sigma$, we thus have
\begin{align*}
0=&\left<S_NE_k\wedge \nabla_{E_k}{\omega^\#},x\wedge y\right>\\
=&\left<S_NE_k,x\right>\left<\nabla_{E_k}{\omega^\#},y\right>-\left<S_NE_k,y\right>\left<\nabla_{E_k}{\omega^\#},x\right>
\end{align*}
Clearly $S_N$ is symmetric. Since $\omega$ is harmonic, $\nabla\omega^\#$ is symmetric as well, and the equation above becomes
\begin{align*}
0=&\left<E_k,S_Nx\right>\left<\nabla_{y}{\omega^\#},E_k\right>-\left<E_k,S_Ny\right>\left<\nabla_{x}{\omega^\#},E_k\right>\\
=&\left<S_Nx,(\nabla\omega^\#)y\right>-\left<S_Ny,(\nabla\omega^\#)x\right>\\
=&\left<[S_N, \nabla\omega^\#]x,y\right>
\end{align*}
Since this holds for every $x$ and $y$, the result follows.
\\

b) Projecting Equation \eqref{E:nullity} onto the subspace $\wedge^2\nu_pM\subset \wedge^2 V$ one gets
\begin{align}\label{E:eqn b}
\II(N,E_k)\wedge\II({\omega^\#}E_k)=0.
\end{align}
Taking inner product with elements $\II(x,y)\wedge\II(u,v)$ (such elements span $\wedge^2\nu_pM$), one gets
\begin{align*}
0=&\left<\II(x,y),\II(N,E_k)\right>\left<\II(u,v),\II(\omega^\#,E_k)\right>-\left<\II(x,y),\II(\omega^\#,E_k)\right>\left<\II(u,v),\II(N,E_k)\right>\\
=&\left<R(x,y)N,E_k\right>\left<R(u,v)\omega^\#,E_k\right>-\left<R(x,y)\omega^\#,E_k\right>\left<R(u,v)N,E_k\right>\\
=&\left<R(x,y)N, R(u,v)\omega^\#\right>-\left<R(x,y)\omega^\#,R(u,v)N\right>\\
\end{align*}
It is easy to check that, given $\eta=\II(x,y)$, then $S_\eta=R(x,y)$. The equation above then becomes
\[
0=\left<[S_{\eta_1},S_{\eta_2}]N,\omega^\#\right>, \quad \eta_1=\II(x,y),\,\eta_2=\II(u,v)
\]
Since $\Omega(TM)=\g$, equation above holds for any $\eta_1,\,\eta_2$ normal vectors. Using Ricci equation, this implies that
\[
\left<R^\perp(\omega^\#,N)\eta_1,\eta_2\right>=0
\]
and in particular $R^\perp(\omega^\#,N)\eta=0$ for all $\eta$ in $\nu_pM$. From Lemma \ref{L:Omega0}, letting $p=\llbracket g\rrbracket $, then $\nu_pM=Ad_g\mathfrak{h}$ and, letting $\eta=Ad_g v$ for $v\in \mathfrak{h}$, one has
\[
[\II(\omega^\#,N),\eta]=0\quad \forall \eta\in Ad_g\mathfrak{h}\quad\Rightarrow\quad [Ad_{g}^{-1}\II(\omega^\#,N),v]=0\quad \forall v\in \mathfrak{h}
\]
Therefore $Ad_{g}^{-1}\II(\omega^\#,N)$ belongs to the center of $\mathfrak{h}$.
\\

c) Projecting Equation \eqref{E:nullity} onto the subspace $T_p\Sigma\otimes \nu_pM\subset \wedge^2 V$ we get
\begin{align}\label{E:eqn c}
0&=-2(\II(E_k,S_N{E_k})\wedge {\omega^\#}+S_NE_k\wedge \II(E_k,{\omega^\#})-\II(E_k,N)\wedge \nabla_{E_k}{\omega^\#}).
\end{align}
The first term can be rewritten as $\II(E_k,E_j)\left<S_NE_k,E_j\right>\wedge {\omega^\#}$. However, the left factor of the exterior product is skew symmetric in $i,k$ and therefore the $(j,k)$-term in the sum cancels with the $(k,j)$-term. This term thus vanishes, and Equation \eqref{E:eqn c} is equivalent to
\[
S_NE_k\wedge \II(E_k,{\omega^\#})+\nabla_{E_k}{\omega^\#}\wedge\II(E_k,N)=0.
\]
Taking the inner product with an element of the form $x\wedge \II(y,z)$ for $x\in T_p\Sigma$ and $y,z\in T_pM$ (these elements span the whole of $T_p\Sigma\otimes \nu_pM$) one gets
\begin{align*}
0=&\left<S_NE_k\wedge \II(E_k,{\omega^\#})+\nabla_{E_k}{\omega^\#}\wedge\II(E_k,N), x\wedge \II(y,z)\right>\\
=&\left<S_NE_k,x\right>\left<\II(E_k,\omega^\#),\II(y,z)\right>+\left<\nabla_{E_k}{\omega^\#},x\right>\left<\II(E_k,N),\II(y,z)\right>\\
=&-\left<S_Nx,E_k\right>\left<R(y,z)\omega^\#,E_k\right>-\left<\nabla_{x}{\omega^\#},E_k\right>\left<R(y,z)N,E_k\right>\\
=&-\left<S_Nx,R(y,z)\omega^\#\right>-\left<\nabla_{x}{\omega^\#},R(y,z)N\right>\\
=&-\left<R(\omega^\#, S_Nx)y,z\right>+\left<R(\nabla_{x}{\omega^\#},N)y,z\right>\\
=&\left<R(\omega^\#, \nabla_xN)y+R(\nabla_{x}{\omega^\#},N)y,z\right>\\
=&\left<\II(\omega^\#, \nabla_xN)+\II(\nabla_{x}{\omega^\#},N), \II(y,z)\right>
\end{align*}

Therefore it follows that
\[
0=\II(\omega^\#, \nabla_xN)+\II(\nabla_{x}{\omega^\#},N)=\nabla_x(\II(\omega^\#,N))-R(\omega^\#,N)x.
\]
\end{proof}

\begin{theorem}
\label{T:rigidity2}
Let $M=G/H$ be a compact symmetric space, $\Sigma\to M$ a closed, immersed, minimal hypersurface, and let 
$\Hr_1$ denote the space of harmonic one-forms $\omega$ on $\Sigma$ satisfying the following two conditions at all points $p\in \Sigma$.
\begin{enumerate}[a)]
\item The operators $\nabla\omega^\#$ and $S_N$ commute.
\item $R(\omega^\#,N,\omega^\#,N)=0$. 
\end{enumerate}
Then
\[ \Ind(\Sigma)\geq {d\choose 2}^{-1}\big(b_1(\Sigma)-\dim \Hr_1-\dim\mathfrak{z}(\mathfrak{h})\big)\]
where $\mathfrak{z}(\mathfrak{h})$ denotes the center $\mathfrak{z}(\mathfrak{h})=\{x\in \mathfrak{h}\mid [x,y]=0\,\forall y\in \mathfrak{h}\}$.
\end{theorem}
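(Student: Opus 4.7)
The plan is to apply Proposition \ref{P:affinebound} to the canonical virtual immersion $\Omega_0: TM \to \g$ of \eqref{E:naturalgenimm}, so that $d = \dim G$. Since $\Omega_0$ has skew-symmetric second fundamental form, Lemma \ref{L:locsym}(a) gives $R(X,Y,X,Y) = |\II(X,Y)|^2$, which makes the $\ACS$ quantity vanish identically (exactly as in the proof of Theorem \ref{MT:indexbound}). Proposition \ref{P:affinebound} then yields
\[ \Ind(\Sigma) \geq \binom{d}{2}^{-1}\bigl(b_1(\Sigma) - \dim \Hr_0\bigr), \]
where $\Hr_0$ denotes the space of harmonic $1$-forms $\omega$ on $\Sigma$ satisfying $J_\Sigma(X_{ij}(\omega)) = 0$ for all $i,j$. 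Hence it suffices to establish the upper bound $\dim \Hr_0 \leq \dim \Hr_1 + \dim \mathfrak{z}(\h)$.

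For this I would invoke Lemma \ref{L:rigidity}: every $\omega \in \Hr_0$ automatically satisfies condition (a) of the theorem, together with the two additional properties that $\Ad_g^{-1}\II(\omega^\#, N) \in \mathfrak{z}(\h)$ at each point $p = \llbracket g \rrbracket \in \Sigma$, and that $|\II(\omega^\#, N)|$ is constant on $\Sigma$. I then fix a reference point $p_0 = \llbracket g_0 \rrbracket \in \Sigma$, choose a local unit normal $N$ near $p_0$, and define the linear evaluation map
\[ \Psi: \Hr_0 \longrightarrow \mathfrak{z}(\h), \qquad \Psi(\omega) = \Ad_{g_0}^{-1}\II(\omega^\#, N)_{p_0}. \]
By Lemma \ref{L:rigidity}(b) this map really does land in $\mathfrak{z}(\h)$, so $\dim(\operatorname{image}\Psi) \leq \dim \mathfrak{z}(\h)$. (Any sign ambiguity in $N$ when $\Sigma$ is one-sided is harmless here, as $\Psi$ only needs to be defined in a neighborhood of $p_0$.)

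It remains to analyze $\ker\Psi$. If $\Psi(\omega) = 0$ then $\II(\omega^\#, N)$ vanishes at $p_0$, and Lemma \ref{L:rigidity}(c) forces it to vanish on all of $\Sigma$. Applying Lemma \ref{L:locsym}(a) once more gives $R(\omega^\#, N, \omega^\#, N) = |\II(\omega^\#, N)|^2 = 0$ pointwise on $\Sigma$, which is exactly condition (b) of the theorem; combined with the condition (a) already inherited from Lemma \ref{L:rigidity}(a), this shows $\omega \in \Hr_1$. Hence $\ker\Psi \subseteq \Hr_1$, and rank-nullity gives $\dim \Hr_0 \leq \dim \Hr_1 + \dim \mathfrak{z}(\h)$, which plugged into the bound above yields the theorem. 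The conceptual content is already contained in Lemma \ref{L:rigidity}; the only nontrivial step in the present argument is identifying the right map $\Psi$ and observing that its kernel sits inside $\Hr_1$ thanks to the Gauss-type identity of Lemma \ref{L:locsym}(a), so I do not expect a significant obstacle.
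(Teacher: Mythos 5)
Your proof is correct and follows essentially the same route as the paper: both define the linear evaluation map $\omega \mapsto \Ad_{g_0}^{-1}\II(\omega^\#,N)_{p_0}$ into $\mathfrak{z}(\h)$, use Lemma~\ref{L:rigidity}(c) (constancy of $\|\II(\omega^\#,N)\|$) to show the kernel lies in $\Hr_1$, and conclude by rank--nullity combined with Proposition~\ref{P:affinebound}. The only cosmetic difference is that the paper introduces an intermediate space of forms satisfying the three conclusions of Lemma~\ref{L:rigidity}, whereas you work directly with the nullity space $\{\omega : J_\Sigma(X_{ij}(\omega))=0\}$ appearing in Proposition~\ref{P:affinebound}; the content is the same.
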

\begin{proof}
Let $\Hr_0$ be the space of harmonic one-forms on $\Sigma$ that satisfy the three conditions listed in Lemma \ref{L:rigidity}. Then $\Hr_1$ is a subspace of $\Hr_0$. More precisely, 
letting $p_0=[e]\in M=G/H$, we may assume $p_0\in \Sigma$  and define the linear map $Z:\Hr_0\to \mathfrak{z}(\mathfrak{h})$ by $Z(\omega)=\II_{p_0}(\omega^\#,N)$. By condition (c) in Lemma \ref{L:rigidity}, $\Hr_1$ equals the kernel of $Z$. Therefore its codimension is at most $\dim\mathfrak{z}(\mathfrak{h})$, and the result follows from Lemma \ref{L:rigidity} and Proposition \ref{P:affinebound}.
\end{proof}

\begin{corollary}
\label{C:CROSS}
Let $M^n=G/H$, $n>2$, be a CROSS, and $\Sigma\to M$ a compact, immersed, minimal hypersurface.
Then
\[\Ind(\Sigma)\geq {\dim G\choose 2}^{-1}b_1(\Sigma).\]
\end{corollary}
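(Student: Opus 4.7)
The plan is to apply Theorem \ref{T:rigidity2} to the canonical virtual immersion $\Omega_0:TM\to\g$ (so that $d=\dim G$), which gives
\[ \Ind(\Sigma)\geq \binom{\dim G}{2}^{-1}\bigl(b_1(\Sigma)-\dim\Hr_1-\dim\mathfrak{z}(\h)\bigr), \]
and to show that the correction terms vanish (or can be absorbed) for every CROSS with $\dim M>2$. Since such an $M$ has strictly positive sectional curvature, Lemma \ref{L:locsym}(a) combined with the orthogonality $\omega^\#\perp N$ yields $\|\II(\omega^\#,N)\|^2=K(\omega^\#\wedge N)\,|\omega^\#|^2$, which vanishes only when $\omega^\#=0$; hence any $\omega\in\Hr_1$ must satisfy $\omega^\#\equiv 0$, i.e.\ $\Hr_1=0$. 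Moreover, for $M\in\{S^n,\,\mathbb{H}P^n,\,\mathbb{O}P^2\}$ the isotropy algebras $\mathfrak{so}(n)$ ($n\geq 3$), $\mathfrak{sp}(n)\oplus\mathfrak{sp}(1)$, and $\mathfrak{spin}(9)$ are semisimple, so $\mathfrak{z}(\h)=0$ and Theorem \ref{T:rigidity2} delivers the corollary directly.

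The delicate case is $M=\C P^n$ with $n\geq 2$, where $\mathfrak{z}(\h)=\R Z_0$ is one-dimensional (with $Z_0=i\operatorname{diag}(1,\dots,1,-n)\in\mathfrak{su}(n+1)$). Here Theorem \ref{T:rigidity2} only yields $\Ind(\Sigma)\geq\binom{\dim G}{2}^{-1}(b_1-1)$, and I would instead use Proposition \ref{P:affinebound} to prove the stronger statement $\Hr_0=0$, where $\Hr_0$ is the space of harmonic one-forms satisfying conditions (a)--(c) of Lemma \ref{L:rigidity}. Let $\omega\in\Hr_0$; by Lemma \ref{L:rigidity}(c) the function $\|\II(\omega^\#,N)\|^2=K(\omega^\#\wedge N)|\omega^\#|^2$ is constant on $\Sigma$, and strict positivity of $K$ on $\C P^n$ implies $\omega^\#$ is either identically zero (whence $\omega=0$) or nowhere zero. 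In the nowhere-zero case, fix $p=\llbracket g\rrbracket\in\Sigma$ and let $v,w\in\m\cong\C^n$ be the preimages of $\omega^\#(p), N(p)$ under $dg$; Lemma \ref{L:rigidity}(b) together with Lemma \ref{L:Omega0} gives $[v,w]=\alpha Z_0$ for some $\alpha\in\R$, which in block form in $\mathfrak{su}(n+1)$ reads $w\bar v^T-v\bar w^T=\alpha i\,I_n$. Applying both sides to any nonzero $u\in\C^n$ with $\bar v^T u=0$ (such a $u$ exists since $n\geq 2$) gives $-(\bar w^T u)v=\alpha i u$, which forces $\alpha=0$; the remaining identity $w\bar v^T=v\bar w^T$, evaluated at $u=v$, then forces $w=cv$ with $c\in\R$. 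But this contradicts $v\perp w$ (in the real inner product) together with $v\neq 0$ and $|w|=1$, so indeed $\Hr_0=0$.

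The hard part is this $\C P^n$ case. The three other CROSS families are settled immediately by Theorem \ref{T:rigidity2}, but the nontrivial centre of $\mathfrak{s}(\mathfrak{u}(n)\oplus\mathfrak{u}(1))$ introduces an apparent deficit of $1$ in the index bound which has to be removed by the explicit bracket computation in $\mathfrak{su}(n+1)$ outlined above.
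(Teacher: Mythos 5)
Your proposal is correct, and in fact it does genuine work that the paper's own proof omits. The paper lists the CROSSes with the (nonstandard) presentations $\mathbb{CP}^n=\SU(n+1)/\SU(n)$, $\mathbb{HP}^n=\Sp(n+1)/\Sp(n)$ and asserts that $\h$ is semisimple, hence centerless, in every case; as you observe, this is false for $\mathbb{CP}^n$, whose isotropy algebra is $\mathfrak{s}(\mathfrak{u}(n)\oplus\mathfrak{u}(1))\cong\mathfrak{u}(n)$, with a one-dimensional center. (Indeed $\SU(n+1)/\SU(n)$ is $S^{2n+1}$, not $\mathbb{CP}^n$.) Applying Theorem \ref{T:rigidity2} as the paper does then only gives the weaker affine bound with deficit $D=1$ for $\mathbb{CP}^n$. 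For the remaining families your reasoning is exactly the paper's: positive sectional curvature forces $R(\omega^\#,N,\omega^\#,N)>0$ whenever $\omega^\#\neq 0$, so $\Hr_1=0$, and semisimplicity of $\h$ kills the $\dim\mathfrak{z}(\h)$ term (you omit $\mathbb{RP}^n$ from your list, but its isotropy algebra $\so(n)$ coincides with that of $S^n$). Your repair of the $\mathbb{CP}^n$ case is correct and self-contained: for $\omega\in\Hr_0$ nonzero, constancy of $\|\II(\omega^\#,N)\|$ from Lemma \ref{L:rigidity}(c) together with positive curvature makes $\omega^\#$ nowhere zero; then writing $[v,w]=\alpha Z_0$ with $Z_0$ spanning $\mathfrak{z}(\h)$, applying both sides of $w\bar v^T-v\bar w^T=\alpha iI_n$ to a vector Hermitian-orthogonal to $v$ (which exists since $n\geq 2$) forces $\alpha=0$, whence $w=cv$ with $c\in\R$, contradicting $v\perp_\R w$, $v\neq 0$, $|w|=1$. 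This gives $\Hr_0=0$ and, via Proposition \ref{P:affinebound} with $D=0$, the stated linear bound. So your proof follows the paper's strategy but closes a real gap in the $\mathbb{CP}^n$ case.
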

\begin{proof}
The CROSSes $M=G/H$ are $S^n=\SO(n+1)/\SO(n)$, $\mathbb{RP}^n=\OO(n+1)/\OO(n)$, $\mathbb{CP}^n=\SU(n+1)/\SU(n)$, $\mathbb{HP}^n=\Sp(n+1)/\Sp(n)$ and $Ca\mathbb{P}^2=F_4/\Spin(9)$. In all these cases,  $\mathfrak{h}$ is semisimple and hence centerless. Now the result follows from the fact that $M$ has positive sectional curvature, together with Theorem \ref{T:rigidity2}.
%Suppose $\omega$ is a harmonic 1-form on $\Sigma$, such that $J_{\Sigma}X_{ij}(\omega)=0$ for all $i,j$. From Proposition \ref{P:rigidity}(b) and the observation above, it follows $\II(\omega^\#,N)=0$ and therefore $0=\|\II(\omega^\#,N)\|^2=\left<R(\omega^\#,N)\omega^\#,N\right>$. However since the curvature is positive, this can only happen if $\omega^\#=0$. The result thus follows from Proposition \ref{P:affinebound}.
\end{proof}

\begin{remark}
\label{R:2dCROSS}
The only $2$-dimensional CROSSes are $S^2$ and $\mathbb{RP}^2$. In these cases, a minimal hypersurface $\Sigma$ is a closed geodesic, so that $b_1(\Sigma)=1$. Since the index is non-negative, in particular one has $\Ind(\Sigma)\geq b_1(\Sigma)-1$.
\end{remark}

%\begin{corollary}
%Let $M=M_1\times M_2$ be a product of two CROSSes, $M_i=G_i/H_i$, and let $\Sigma\to M$ a compact, immersed, minimal hypersurface.
%
%Then 
%\[\Ind(\Sigma)\geq {\dim G_1\times G_2\choose 2}^{-1}(b_1(\Sigma)-(1+\delta_1+\delta_2))\]
% where $\delta_i=1$ if $\dim M_i=2$, and $\delta_i=0$ otherwise.
%\end{corollary}

\begin{proof}[Proof of Theorem \ref{MT:affine}(b)]
Define $\delta_i$ for $i=1,2$ by: $\delta_i=1$ if $\dim M_i=2$, and $\delta_i=0$ otherwise. Note that $D=1+\delta_1+\delta_2$, and that $\delta_1+\delta_2$ is the dimension of the center of $\h$.

Suppose first that $\Sigma$ is of the form $M_1\times \Sigma_2$, where $\Sigma_2$ is a minimal, compact hypersurface in $M_2$. By Corollary \ref{C:CROSS} and Remark \ref{R:2dCROSS}, the index of $\Sigma_2$ is bounded below by ${\dim G_2\choose 2}^{-1}(b_1(\Sigma)-\delta_2)$, In this case we have $b_1(\Sigma)=b_1(\Sigma_2)$, and
\begin{align*}
\Ind(\Sigma)\geq \Ind(\Sigma_2)&\geq {\dim G_1\choose 2}^{-1}(b_1(\Sigma_2)-\delta_2)\\
&\geq {\dim G_1\times G_2\choose 2}^{-1}(b_1(\Sigma)-1-\delta_1-\delta_2)
\end{align*}
thus the proposition is proved in this case. Clearly the same argument would work in the case $\Sigma=\Sigma_1\times M_2$, where $\Sigma_1$ is a minimal compact immersed hypersurface in $M_1$.

Suppose now that $\Sigma$ is not as before. 
Then, by Theorem \ref{T:rigidity2}, it suffices to show that the space of harmonic one-forms $\omega$ on $\Sigma$ such that $R(\omega^\#,N,\omega^\#,N)=0$ is at most one-dimensional. 

%We prove this statement by contradiction, hence we suppose we have linearly independent 1-forms $\{\omega_1, \ldots \omega_{k+2}\}$, $k=\delta_1+\delta_2$. Writing $M=G/H$ with $G=G_1\times G_2$ and $H=H_1\times H_2$, it is easy to check that $k$ equals the dimension of the center of $\mathfrak{h}$.
%Let $W=\Span\{\omega_1,\ldots \omega_{k+2}\}$, and consider the map $Z:W\to \mathfrak{h}$, $\omega_i\mapsto Ad_g^{-1}\II(\omega_i^\#(p),N(p))$ for some point $p=[g]\in \Sigma$. By Proposition \ref{P:rigidity}(b) the image of $Z$ is contained in the center of $\mathfrak{h}$ and, by dimension reason, there exist two linearly independent one forms in the kernel of $Z$. Up to a change of basis, we can suppose $Z(\omega_1)=Z(\omega_2)=0$. By Proposition \ref{P:rigidity}(c), it follows that $\II(\omega_1^\#,N)=\II(\omega_2^\#,N)\equiv 0$ everywhere in $\Sigma$. In particular, $\|\II(\omega_i^\#,N)\|^2=\big<R(\omega_i^\#,N)\omega_i^\#,N\big>\equiv 0$ for $ i=1,2$, that is, $N$ spans zero curvature planes with both $\omega_1^\#$ and $\omega_2^\#$ at all points. However, 

Let $\omega_1,\omega_2$ be two such harmonic one-forms.
Since $\Sigma$ is neither of the form $\Sigma_1\times M_2$ nor of the form $M_1\times \Sigma_2$, there exists an open set $U\subset \Sigma$ such that for every $p=(p_1,p_2)\in U$, the normal vector $N_p\in T_{p_1}M_1\oplus T_{p_2}M_2$ is not tangent to neither $T_{p_1}M_1$ nor $T_{p_2}M_2$. In particular, for every $p\in U$ there exists a unique zero curvature plane $\pi_p$ through $N_p$, and in particular a unique direction in $\pi_p$, perpendicular to $N_p$. It thus follows that $\omega_1^\#$ and $\omega_2^\#$ are collinear in $U$: $\omega_1=f\omega_2$ for some function $f:U\to \R$. However, since $\omega_1,\omega_2$ are both closed and co-closed, it is easy to check that $df$ must be both parallel and normal to $\omega_2$ in $U$, and in particular $df=0$. Thus $f$ is constant, and $\omega_1,\omega_2$ are linearly dependent in $U$, hence linearly dependent on the whole of $\Sigma$, a contradiction.
\end{proof}

\begin{proof}[Proof of Theorem \ref{MT:affine}(a)]
By Theorem \ref{T:rigidity2}, it suffices to show that $\dim\Hr_1\leq 2r-3$. Let $\omega\in \Hr_1$. The fact that $\nabla\omega^\#$ and $S$ commute is equivalent to condition (2.3) in \cite[Proposition 5]{ACS17}, and from that proposition it follows that $\omega$ is determined by its value and the value of its covariant derivative at any point $p$.

Let $p\in \Sigma$ where all the principal curvatures are distinct. We may assume that $N(p)\in T_pM$ belongs to a regular (that is, principal or exceptional) orbit under the isotropy representation of $G_p$ on $T_pM$. Indeed, the set of singular vectors has codimension at least two, and, since the shape operator has distinct eigenvalues, its image has codimension at most one. This means that if $N(p)$ is singular, then there is a nearby $p'\in \Sigma$ such that $N(p')$ is regular.

Define
\[\mathcal{V}=\{X\in T\Sigma \ |\ \II(X,N)=0 \}=\{X\in T\Sigma \ |\ R(X,N,X,N)=0 \}.\]
Since $N(p)$ is regular, $\mathcal{V}$ is a smooth distribution of rank $r-1$ on an open subset of $\Sigma$ containing $p$. Moreover, for any $\omega\in\Hr_1$, we have $\omega^\#\in\mathcal{V}$.

Let $\{e_1, \ldots, e_n\}$ be an orthonormal frame of eigenvectors for the shape operator, defined on an open neighbourhood of $p$, with $S(e_i)=a_i e_i$. Since $\nabla \omega^\#$ commutes with $S$, there are functions $\lambda_i$ defined near $p$, such that $\nabla_{e_i} \omega^\#=\lambda_i e_i$. Differentiating the equation $\II(\omega^\#,N)=0$ and using Lemma \ref{L:locsym}, we obtain
\[ \lambda_i\II(e_i, N)=-a_i\II(\omega^\#, e_i) .\]
Since $N$ is regular, there are $k\leq r-1$ values of $i$ such that $\II(e_i,N)=0$. Assume without loss of generality that $\II(e_{i},N)\neq 0$ for any $i> k$. From the equation above, $\lambda_i$ is completely determined by $\omega^\#$ for any $i>k$. The values $(\lambda_1(p),\ldots \lambda_k(p))$ are free, up to the further constraint that $\sum_{i=1}^n\lambda_i(p)=0$.

Therefore, the pair $(\omega^\#(p), \nabla\omega^\#(p))$ is determined by $(\omega^\#(p),\lambda_1(p),\ldots \lambda_{k-1}(p))\in \V\times \R^{k-1}$. In particular $\dim\Hr_1\leq r+k-2\leq 2r-3$.
\end{proof}

\bibliographystyle{alpha}

\end{document}